\newcommand\eps{\varepsilon}
\renewcommand\phi{\varphi}
\newcommand\pro[1]{\mathbb{P}\left(#1\right)}
\newcommand\esp[1]{\mathbb{E}\left[#1\right]}
\newcommand\espcc[2]{\mathbb{E}_{#1}\left[#2\right]}
\newcommand\uno[1]{1_{\left\{#1\right\}}}
\newtheorem{thm}{Theorem}[section]
\newtheorem{prop}[thm]{Proposition}
\newtheorem{assu}{Assumption}
\newtheorem{lem}[thm]{Lemma}
\newtheorem{rem}[thm]{Remark}
\newcommand\n{\mathbb{N}}
\renewcommand\r{\mathbb{R}}
\newcommand\E{\mathcal{E}}
\renewcommand\ll{\left}
\newcommand\rr{\right}
\begin{document}
	
	\begin{frontmatter}
		
		\title{Annealed limit for a diffusive disordered mean-field model with random jumps}
		
		\runtitle{Annealed limit in random environment}
		
		\begin{aug}
			
			\author{\fnms{Xavier} \snm{Erny}\ead[label=e4]{xavier.erny@polytechnique.edu}}

			\address{Ecole polytechnique, Centre de math\'ematiques appliqu\'ees (CMAP), 91128 Palaiseau
			}
			
			\runauthor{X. Erny}
			
		\end{aug}

		\begin{abstract}
			We study a sequence of $N-$particle mean-field systems, each driven by $N$ simple point processes $Z^{N,i}$ in a random environment. Each $Z^{N,i}$ has the same intensity $(f(X^N_{t-}))_t$ and at every jump time of $Z^{N,i},$ the process $X^N$ does a jump of height $U_i/\sqrt{N}$ where the $U_i$ are disordered centered random variables attached to each particle. We prove the convergence in distribution of $X^N$ to some limit process $\bar X$ that is solution to an SDE with a random environment given by a Gaussian variable, with a convergence speed for the finite-dimensional distributions. This Gaussian variable is created by a CLT as the limit of the patial sums of the~$U_i.$ To prove this result, we use a coupling for the classical CLT relying on the result of \cite{komlos_approximation_1976}, that allows to compare the conditional distributions of $X^N$ and $\bar X$ given the environment variables, with the same Markovian technics as the ones used in \cite{erny_mean_2022}.
		\end{abstract}

		\begin{keyword}[class=MSC]
			\kwd{60K37}
			\kwd{60J35}
			\kwd{60J25}
			\kwd{60J60}
			\kwd[; secondary ]{60F05}
			\kwd{60G50}
			\kwd{60G55}
		\end{keyword}

		\begin{keyword}
			\kwd{Annealed limit in random environment}
			\kwd{Central limit theorem coupling}
			\kwd{Piecewise deterministic Markov processes}
			\kwd{Mean-field model}
		\end{keyword}
		
	\end{frontmatter}


\section{Introduction}

The term disordered comes from physics litterature to designate some "asymmetric systems". Particle systems in random environment can often be seen as disordered models, since,
in this kind of models, it is possible to attach to each particle (or each pair of particles) a random variable, interpreted as a disordered variable, creating asymmetric interactions between the particles.
This is for example the case of models with spin glass dynamics, where for each pair of particle~$(i,j),$ there exists a disordered variable of the form $S_iS_jV_{ij}$, where $S_i$ and $S_j$ are typically $\{-1,+1\}-$valued variables modelling the spins of the particles~$i$ and~$j$, and $V_{ij}$ is a random variable modelling an interaction strength between the two particles. This kind of model has been introduced in the seminal paper of \cite{sherrington_solvable_1975}.

In the model we study in this paper, unlikely with spin glass dynamics model, we attach disordered variables to each particle (and not to each pair of particles) that are i.i.d. and independent of the time, like in the very similar model of~\cite{pfaffelhuber_mean-field_2022}. More precisely, we study the solutions of stochastic differential equations with a drift term and a jump term that depends on the disordered variables. There is a natural application of this kind of model in neurosciences (as explained in Section~4 of \cite{pfaffelhuber_mean-field_2022}), where the equations model the dynamics of the membrane potentials of neurons, the jump times model the times at which the neurons receive spikes by another neuron of the network, and the jump heights (given by the disordered variables) model the synaptic weights in the network. The drift term models the (deterministic) dynamics of the membrane potential between its jump times.

The property we want to prove is the convergence in distribution of the disordered system as the number of particles of the system goes to infinity. As we work in a random environment, the convergence of the processes can be understood in two ways: a quenched convergence (i.e. convergence conditionally on the environment) or an annealed convergence (i.e. convergence when the environment variables are averaged). We refer to~\cite{ben_arous_large_1995} and~\cite{guionnet_averaged_1997} for the definitions of the terms "quenched" and "annealed" that we use in this paper.

Formally, the aim of the paper is to prove the convergence in distribution of a process~$(X^N_t)_t$ defined in a random environment. Let us define rigorously this process. If the environment $(u^{[N]}_j)_{1\leq j\leq N}\in \r^N$ is fixed, we define $(X^N_t(u^{[N]}))_t$ as the solution of the following SDE:
\begin{equation}\label{XNu}
dX^N_t(u^{[N]}) = b(X^N_t(u^{[N]}))dt + \frac{1}{\sqrt{N}}\sum_{j=1}^N u^{[N]}_j\int_{\r_+} \uno{z\leq f(X^N_{t-}(u^{[N]}))}d\pi_j(t,z),
\end{equation}
where $b$ and $f$ are deterministic functions and $\pi_j$ ($1\leq j\leq N$) are independent Poisson measures on $[0,+\infty)^2$ of intensity~$dt\cdot dz,$ that are independent of~$X^N_0.$ We note $\nu^N_0$ the distribution of~$X^N_0.$ By definition, the process $(X^N_t)_t$ is defined as
\begin{equation}\label{XN}
X^N_t := X^N_t(U^{[N]}),
\end{equation}
where the variables $U^{[N]}_j$ ($1\leq j\leq N$) are i.i.d. centered random variables with finite variance~$\sigma^2$, that are independent of the Poisson measures~$\pi_j$ ($1\leq j\leq N$) and of the initial condition~$X^N_0$. We note $\mu$ the law of these variables. This law is independent of~$N$ and is a parameter of the model. For the sake of notation, instead of defining for each $N-$particle system a family of~$N$ variables $U^{[N]}_j$ ($1\leq j\leq N$), we introduce a countable sequence of random variables $U_j$ ($j\geq 1$). By definition, we set $U^{[N]} := (U_1,...,U_N)$ (e.g. the first $N$ variables of $U^{[N+1]}$ are exactly $U^{[N]}$). In particular the~$[N]$ in superscript will be dropped.


The limit of~$(X^N_t)_t$ is shown to be a process~$(\bar X_t)_t$ that is also defined in a random environment. For a fixed~$w\in\r,$ let $(\bar X_t(w))_t$ be defined as the solution of
\begin{equation}\label{barXw}
d\bar X_t(w) = b(\bar X_t(w))dt + wf(\bar X_t(w))dt + \sigma\sqrt{f(\bar X_t(w))}dB_t,
\end{equation}
where $B$ is a standard one-dimensional Brownian motion that is independent of~$\bar X_0.$ We note~$\bar\nu_0$ the distribution of~$\bar X_0.$ The limit process~$(\bar X_t)_t$ is defined as
\begin{equation}\label{barX}
\bar X_t := \bar X_t(W),
\end{equation}
where $W$ is a Gaussian variable with parameter~$(0,\sigma^2)$ that is independent of~$B,\bar X_0.$

Heuristically, a simple way to obtain the limit equation~\eqref{barXw}$-$\eqref{barX} from~\eqref{XNu}$-$\eqref{XN} consists in writing the SDE of~$(X^N_t)_t$ as
\begin{equation}\label{XNheuristique}
dX^N_t = b(X^N_t)dt + \frac{1}{\sqrt{N}}\sum_{j=1}^N U_j \int_{\r_+}\uno{z\leq f(X^N_{t-})}d\tilde \pi_j(t,z) + \ll(\frac1{\sqrt{N}}\sum_{j=1}^N U_j\rr) f(X^N_t)dt,
\end{equation}
where $\tilde\pi_j(dt,dz) := \pi_j(dt,dz)-dtdz$ is the compensated Poisson measure of~$\pi_j$ ($1\leq j\leq N$). Under this form, the second term of the SDE above is a local martingale (conditionally on the environment) whose jump heights vanish as $N$ goes to infinity. So, in the limit equation, it creates the Brownian term in~\eqref{barXw}. And the third term in the SDE above is a drift term that corresponds to the second term of~\eqref{barXw} since, according to the CLT, the variable~$N^{-1/2}\sum_{j=1}^N U_j$ converges in distribution to~$W\sim\mathcal{N}(0,\sigma^2).$

The main result of the paper is the convergence in distribution of the process~$(X^N_t)_t$ to the process~$(\bar X_t)_t.$ Our approach only allows to prove the annealed convergence of the processes, because there is a tricky problem concerning the quenched result: for a convergence to be true in our model, we need to guarantee $N^{-1/2}\sum_{j=1}^N U_j$ to converge. {This convergence holds true in distribution but cannot be true almost surely. We still manage to prove some properties related to a quenched convergence.}

The dynamics~\eqref{XNu}$-$\eqref{XN} and~\eqref{barXw}$-$\eqref{barX} are similar to respectively equations~$(3.9)$ and~$(3.10)$ of \cite{pfaffelhuber_mean-field_2022}. Indeed, if we consider $b(x):=-\alpha x$ for the drift function in our model, we obtain exactly the same dynamics as \cite{pfaffelhuber_mean-field_2022} for the convolution kernel~$\phi(t) := e^{-\alpha t}.$ Consequently, our main result Theorem~\ref{mainresult} can be compared to Theorem~2 of \cite{pfaffelhuber_mean-field_2022}. However, the proof of \cite{pfaffelhuber_mean-field_2022} cannot be used to prove Theorem~\ref{mainresult} since it relies on the assumption that the environment variables~$U_j$ ($j\geq 1$) follow Rademacher distribution, whereas we only assume the environment variables to be centered with some exponential moments. Note that it would also not be possible to prove Theorem~2 of \cite{pfaffelhuber_mean-field_2022} with the proof of our Theorem~\ref{mainresult} since it requires to have some Markovian structure (conditionally to the environment variables), which is not the case for Hawkes processes with general convolution kernel as in \cite{pfaffelhuber_mean-field_2022}. Another interesting point of Theorem~\ref{mainresult} is that we have an explicit convergence speed.

The model of this paper is also close to the one of~\cite{erny_mean_2022}, except that in~\cite{erny_mean_2022} the environment variables $U_j$ ($1\leq j\leq N$) in~\eqref{XNu} were replaced by centered marks of the point processes~$(Z^{N,j}_t)_t$ defined as
$$Z^{N,j}_t := \int_{[0,t]\times\r_+}\uno{z\leq f(X^{N}_{s-})}d\pi_j(s,z).$$

In term of neurosciences, the model of \cite{erny_mean_2022}, contrary to the model~\eqref{XNu}$-$\eqref{XN} and \cite{pfaffelhuber_mean-field_2022}, is not consistent with the following biological property: the role of a synapse cannot change (i.e. it is either always excitatory or always inhibitory). Mathematically, if the rate function~$f$ is monotone, it means that every time some particle~$i$ interacts with another particle~$j$ through a synaptic strength~$U_i$, the sign of $U_i$ is always the same. This property is not guaranteed in \cite{erny_mean_2022}, where the~$U_i$ are marks of some point processes (and hence they change at each spiking time of the same neuron), but it holds true in the model of this paper where they are fixed environment variables.

The model of this paper is a priori harder to study because, in \cite{erny_mean_2022}, the processes~$(X^N_t)_t$ (and their limit) are Markov processes and semimartingales, which is not the case here because of the random environment. In order to apply results from the theories of Markov processes and semimartingales, we need to work conditionally on the environment and, in a second time, to integrate over the environment.

To work conditionally on the environment in a proper manner, we use a coupling between the variables $U_j$ ($j\geq 1$) of~\eqref{XN} and the Gaussian variable~$W$ of~\eqref{barX}, corresponding to a coupling result for the classical CLT. To be more precise and formal, we construct a sequence of identically distributed and non-independent Gaussian variables~$(W^{[N]})_N$ such that, for every~$N\geq 2,$
\begin{equation}\label{couplage}
\ll|\frac1{\sqrt{N}}\sum_{j=1}^N U_j - W^{[N]}\rr|\leq K\frac{\ln N}{\sqrt{N}},
\end{equation}
where $K>0$ is a random variable independent of~$N.$ This coupling and the control that we use on the random variable~$K$ rely on Theorem~1 of \cite{komlos_approximation_1976}. Indeed, if we assume that the distribution~$\mu$ admits some exponential moments: there exists~$\alpha>0$ such that
$$\int_\r e^{\alpha|x|}d\mu(x)<\infty,$$
then, it is possible (following the reasonning of Section~7.5 of \cite{ethier_markov_2005} that relies on Theorem~1 of \cite{komlos_approximation_1976}) to construct on the same probability space (possibly enlarged) as $(U_j)_{j\geq 1}$ a standard one-dimensional Brownian motion~$(\beta_t)_t$ such that, the random variable~$K$ defined as
$$K := \underset{N\geq 2}{\sup}~\ll|\sum_{j=1}^N U_j - \sigma\beta_N\rr|/\ln N$$
is finite almost surely and admits exponential moments (this is stated and proved in Lemma~\ref{momentK} for self-containedness). Then, defining $W^{[N]}$ in the following way
$$W^{[N]} := \sigma \beta_N/\sqrt{N}$$
gives exactly~\eqref{couplage}. This construction is recalled at Appendix~\ref{couplingproof}.

The coupling~\eqref{couplage} allows us to compare the conditional distributions of the processes~$(X^N_t)_t$ and~$(\bar X_t^N)_t$ given the environment variables (where $\bar X^N$ is defined as $\bar X(W^{[N]})$ in~\eqref{barX}). {This results can be used to prove that the difference of the finite-dimensional distributions vanishes almost surely w.r.t. the randomness of the environment. However, we cannot obtain a quenched convergence in the usual sense, since the version of the limit to which we compare $X^N$ is $\bar X^N$ which still depends on~$N$. More precisely, the problem is the fact that, the sequence of Gaussian variables~$W^{[N]}$ has (almost surely) many subsequential limits: indeed, recalling that $W^{[N]} := \sigma\beta_N/\sqrt{N}$ for some standard Brownian motion~$\beta,$ it is of common knowledge that the limit superior (resp. inferior) of $W^{[N]}$ is infinity (resp. minus infinity) almost surely. 

To prove this result, we need to introduce formally~$\E$ the sigma-field related to the random environment:
$$\E := \sigma\ll(U_j~:~j\geq 1\rr)\vee\sigma\ll(W^{[N]}~:~N\in\n^*\rr).$$

And, in order to compare the conditional distributions of $(X^N_t)_t$ and $(\bar X^N_t),$ we introduce their (conditional) semigroups $(P^N_{\E,t})_t$ and $(\bar P^N_{\E,t})_t$, and their infinitesimal generators $A^N_\E$ and $\bar A^N_\E$ w.r.t.~$\E$ (see Section~\ref{notation} for the definitions). Using our coupling, we obtain a bound for the difference of the generators for sufficiently smooth test-functions, and deduce a bound for the semigroups using the following formula (which can be found in Lemma~1.6.2 of \cite{ethier_markov_2005} and in equation~$(3.1)$ of \cite{erny_mean_2022} with different terminologies and hypotheses): for $g$ smooth enough, $t\geq 0$ and $x\in\r,$
\begin{equation}\label{trotter}
P^N_{\E,t}g(x) - \bar P^N_{\E,t}g(x) = \int_0^t P^N_{\E,t-s}\ll(A^N_\E - \bar A^N_\E\rr)\bar P^N_{\E,s}g(x)ds.
\end{equation}

Note that, to use the formula above, one needs to guarantee some regularity properties on the limit semigroup~$(\bar P^N_{\E,t})_t$. This is proved using the regularity of the stochastic flow of the process~$(\bar X^N_t)_t$ (with a proof similar as the one of Proposition~3.4 of \cite{erny_mean_2022}).

Let us finally mention that the same kind of model (i.e. particle systems directed by SDEs where the jump term depends on random environment variables) has already been studied in normalization~$N^{-1}$ in~\cite{chevallier_mean_2019} and more recently in~\cite{agathe-nerine_multivariate_2022}. In both of these references, the random environment relies on the spatial structure of the particle system. To the best of our knowledge, \cite{pfaffelhuber_mean-field_2022} is the first paper about the convergence of this type of model in normalization~$N^{-1/2}.$ However this kind of convergence concerning models where a drift term is driven by a random environment in normalization~$N^{-1/2}$ have already been proved: e.g. \cite{ben_arous_large_1995}, \cite{guionnet_averaged_1997} and \cite{dembo_universality_2021}. And, in a similar framework, \cite{lucon_quenched_2011} has proved a quenched convergence of the fluctuations of a particle system (with a drift term driven by a random environment) in normalization~$N^{-1},$ which is a similar regime. Note that the tricky problem concerning the quenched convergence mentionned in the previous paragraph (also related to Remark~\ref{noquench}) is also encountered in \cite{lucon_quenched_2011}.

{\bf Organization.} In Section~\ref{notation}, we introduce the notation that we use throughout the paper. The assumptions, the main result about the annealed converge (i.e. Theorem~\ref{mainresult}) and a quenched control (i.e. Proposition~\ref{quenched}) are stated in Section~\ref{assumpresult}. Section~\ref{proofmainresult} is dedicated to prove our main result Theorem~\ref{mainresult}, while Proposition~\ref{quenched} is proved in Section~\ref{proofquenched}. Finally, the CLT coupling is formally recalled and its main property is proved in Appendix~\ref{couplingproof}, and Appendix~\ref{prooftechnical} gathers the proofs of some technical lemmas.

\subsection{Notation}
\label{notation}

In the paper, we use the following notation:
\begin{itemize}
	\item For $T>0,$ we note $D([0,T],\r)$ (resp. $D(\r_+,\r)$) the set of c\`adl\`ag functions defined on~$[0,T]$ (resp. $\r_+$) endowed with Skorohod topology (see for example Section~12 (resp. Section~16) of \cite{billingsley_convergence_1999}).
	\item For $n\in\n^*,$ $C^n_b(\r)$ denotes the set of real-valued functions defined on~$\r$ that are $C^n$ such that all their derivatives (up to order~$n$) are bounded.
	\item For $n\in\n^*,$ and $g\in C^n_b(\r),$ we note
	$$||g||_{n,\infty} := \sum_{k=0}^n \ll|\ll| g^{(k)}\rr|\rr|_\infty.$$
	\item For $\nu_1,\nu_2$ distributions on~$\r$ with finite first order moments, we use the notation $d_{KR}(\nu_1,\nu_2)$ for the Kantorovich-Rubinstein metric between~$\nu_1$ and~$\nu_2.$ This quantity is defined as:
	$$d_{KR}(\nu_1,\nu_2) := \underset{g}{\sup}~\ll\{\int_\r g(x)d\nu_1(x) - \int_\r g(x) d\nu_2(x)\rr\},$$
	where the supremum is taken over the Lipschitz continuous functions~$g:\r\rightarrow\r$ whose Lipschitz constants are non-greater than one.
	\item If $(X_t)_{t\geq 0}$ is a real-valued Markov process, $(X^{(x)}_t)_{t\geq 0,x\in\r}$ denotes its stochastic flow. In other words, for $x\in\r,$ $(X^{(x)}_t)_t$ is the process starting at position~$x\in\r$ that is defined by the dynamics of~$(X_t)_{t\geq 0}.$ In addition, if the stochastic flow is~$n$ times differentiable w.r.t. the space variable~$x$ at time~$t$, we note $\partial^n_x X^{(x)}_t$ the related derivative.
	\item We note $U_j$ ($j\geq 1$) the environment variables introduced in~\eqref{XNu}$-$\eqref{XN} (see Assumption~\ref{hypenv} for the hypotheses satisfied by these random variables) and $\sigma^2$ their variance. The variables~$W^{[N]}$ ($N\geq 1$) coupled with the $U_j$ ($j\geq 1$) are introduced in~\eqref{couplage} (or alternatively at Appendix~\ref{couplingproof}). Each $W^{[N]}$ is used to define the version of the limit equation~\eqref{barXw}$-$\eqref{barX} where the environment is coupled with the one of the $N$-th equation~\eqref{XNu}$-$\eqref{XN}.
	\item In all the paper, we always note $(\Omega,\mathbb{P})$ the probability spaces on which we work (only the environments of the equations are coupled, so all the processes need not be defined on the same space). They are assumed to be complete and large enough to define what we need. They will not be given explicitly.
	\item We note $\E$ the sigma-field of the environment variables:
	$$\E := \sigma\ll(U_j~:~j\geq 1\rr)\vee\sigma\ll(W^{[N]}~:~N\in\n^*\rr).$$
	In addition, $\mathbb{E}_\E$ designates the conditional expectation given~$\E,$ and $\mathbb{P}_\E$ the related probability measure (i.e. $\mathbb{P}_\E(A) = \mathbb{E}_\E[1_{A}]$). If $\nu$ is a distribution on~$\r,$ we may note $\mathbb{E}_{\E,\nu}[h(X^N)]$ the expectation conditionally on~$\E$ under which $X^N_0$ has law~$\nu,$ and a similar notation for~$\bar X^N$ (notice that this notation makes sense because these processes are Markov processes conditionally on~$\E$).
	\item As stated earlier, $\nu^N_0$ and $\bar\nu_0$ are the respective distributions of $X^N_0$ and $\bar X_0.$ In the following, the quantity $\mathbb{E}_\E[g(X^N_t)]$ always denotes $\mathbb{E}_{\E,\nu^N_0}[g(X^N_t)]$ (with similar notation for the process~$\bar X$).  
	\item We note $(P^N_{\E,t})_t$ and $A^N_\E$ respectively the semigroup and the (extended) generator of the process~$(X^N_t)_t$ defined at~\eqref{XNu}$-$\eqref{XN}, conditionally on the environment~$\E.$ Similarly, we use the notation $(\bar P^N_{\E,t})_t$ and $\bar A^N_\E$ for the limit process~$\bar X^N_t$ defined at~\eqref{barXw}$-$\eqref{barX}, defined w.r.t. the environment variable $W := W^{[N]}.$ Note that it would not be possible to consider the non-conditional semigroups and generators since these processes are only Markovian conditionally on the environment. For the precise notion of semigroups and (extended) generators we use in this paper, we refer to Appendix~$A$ of \cite{erny_mean_2022}: for $g$ in $C_b(\r),$ $x\in\r$ and $t\geq 0,$
	$$P_{\E,t}g(x) := \espcc{\E}{g(X_t^{(x)})} = \espcc{\E,\delta_x}{g(X_t)},$$
	and, $A_\E g$ (if it exists) is characterized as the unique function such that for all $x\in\r,t\geq 0,$
	$$P_{\E,t}g(x) - g(x) = \int_0^t P_{\E,s}A_\E g(x)ds.$$
	\item {Given two positive real-valued sequences~$(u_N)_N$ and~$(v_N)_N$, we use the standard notation $u_N = \mathcal{O}(v_N)$ for: there exists~$N_0\in\n$ and~$K>0$ such that, for all~$N\geq N_0$, $u_N \leq K v_N$. Let us point out that, if the numbers~$u_N$ and~$v_N$ ($N\in\n$) are random variables, then the meaning of "almost surely, $u_N = \mathcal{O}(v_N)$" has to be understood as previously where~$N_0$ and~$K$ may be random quantities (not necessarily random variables).}
	\item We note $C$ any positive constant. The value of $C$ can change from line to line in an equation. And, if $C$ depends on some parameter~$\theta,$ we note $C_\theta$ instead (the dependency w.r.t. the parameters of the model such as $b,f,\mu,\sigma$ will not be written explicitly).
\end{itemize}

\subsection{Assumptions and main result}
\label{assumpresult}

Let us state the hypotheses we need to prove our main result. The first assumption concerns the environment variables $U_j$ ($j\geq 1$). It allows to use the result of \cite{komlos_approximation_1976} to couple the environment variables $U_j$ ($j\geq 1$) with a sequence of Gaussian variables~$W^{[N]}$ used to define the random environment of the limit equation (see Appendix~\ref{couplingproof} for the formal coupling).

\begin{assu}\label{hypenv}
	The variables $U_j$ ($j\geq 1$) are i.i.d. $\mu-$distributed. The law $\mu$ is centered and admits exponential moments: there exists some $\alpha>0$ such that
	$$\int_\r e^{\alpha|x|}d\mu(x)<\infty.$$
	We note $\sigma^2$ the variance of~$\mu.$
\end{assu}

The next assumption gives the conditions we need on the coefficients of the SDEs~\eqref{XNu} and~\eqref{barXw}. Let us note that the function~$f$ is obviously assumed to be non-negative in all the paper, so we will not write this condition in our assumptions.

\begin{assu}\label{hypfun}
	The functions~$b$,~$f$ and~$\sqrt{f}$ are $C^4$, and, for each $k\in\{1,2,3,4\},$ the functions $b^{(k)}$, $f^{(k)}$ and $\sqrt{f}^{(k)}$ are bounded.
\end{assu}

Note that Assumption~\ref{hypfun} implies that the functions~$b$,~$f$ and~$\sqrt{f}$ are Lipschitz continuous. In particular, under this condition, the equations~\eqref{XNu} and~\eqref{barXw} are (strongly) well-posed (it is a consequence of Theorem~IV.9.1 of \cite{ikeda_stochastic_1989} and of classical a priori estimates that can be established on the processes, conditionally on the environment: see for instance Proposition~$C.1$ of \cite{erny_mean_2022} or Proposition~2 of \cite{fournier_toy_2016}). Once the strong well-posedness is established given a fixed environment, it is possible to define an explicit probability space containing the random environment as well as what is needed in the SDEs (i.e. Poisson measures, Brownian motion and initial conditions) such that they are independent. Moreover Assumption~\ref{hypfun} is a necessary condition of Theorem~1.4.1 of \cite{kunita_lectures_1986} that implies that the stochastic flow of the limit process $(\bar X_t)_t$ is $C^3$ w.r.t. its initial condition (see also Theorem~4.6.5 of \cite{kunita_stochastic_1990}).

\begin{rem}
	In our proof, the only condition from Assumption~\ref{hypfun} that we use directly is that the functions~$b$ and~$f$ are measurable and sublinear. The stronger conditions allow to guarantee the well-posedness of our equations and to apply Theorem~1.4.1 of \cite{kunita_lectures_1986}.
\end{rem}

Now we give the hypotheses we need on the laws of the initial conditions of the processes~$(X^N_t)_t$ ($N\in\n^*$) and~$(\bar X_t)_t.$

\begin{assu}\label{hypinit}$ $
	\begin{enumerate}
		\item The probability measures $\nu_0^N$ converge to~$\bar\nu_0$ for the Kantorovich-Rubinstein metric~$d_{KR}.$ Equivalently, it means that $\nu_0^N$ converges weakly to $\bar\nu_0$ and that $\int |x|d\nu^N_0(x)$ converges to $\int |x|d\bar\nu_0(x)$ as $N$ goes to infinity (see Theorem~6.9 of \cite{villani_optimal_2008}).
		\item The probability measures~$\nu_0^N$ admit sixth order moments uniformly bounded in~$N\in\n^*,$ and $\bar\nu_0$ admits a first order moment:
		$$\underset{N\in\n^*}{\sup}\int_\r x^6 d\nu_0^N(x)<\infty\textrm{ and }\int_\r |x|d\bar\nu_0(x)<\infty.$$
	\end{enumerate}
\end{assu}

Since our main result is the convergence in distribution of $(X^N_t)_t$ to $(\bar X_t)_t,$ we obviously need an assumption on the convergence of their initial conditions. Item~1 of Assumption~\ref{hypinit} is a convergence a bit stronger than the necessary convergence in distribution, but it seems to be the most appropriate in our proof. Besides, we need to assume that the initial conditions of the processes admit finite first order moments in order to manipulate Kantorovich-Rubinstein metric. And, for technical reasons, we also need some a priori estimates on the process~$(X^N_t)_t$ that are uniform in~$N\in\n^*.$

{Now we state our main result: the annealed convergence in distribution of the processes $(X^N_t)_t$ as $N$ goes to infinity, with an explicit convergence speed for their finite-dimensional distributions.}

\begin{thm}\label{mainresult}
	Grant Assumptions~\ref{hypenv},~\ref{hypfun} and~\ref{hypinit}. The process~$(X^N_t)_t$ converges to~$(\bar X_t)_t$ in distribution on~$D(\r_+,\r).$ In addition, we have the following convergence speed for the finite-dimensional distributions: for all $T>0$, there exists some $N_T\in\n^*$ such that, for any $k\in\n^*,t_1\leq ...\leq t_k\leq T,$ $g_1,...,g_k\in C^3_b(\r)$ and $N\geq N_T,$
	$$\ll|\esp{g_1(X^N_{t_1})...g_k(X^N_{t_k})} - \esp{g_1(\bar X_{t_1})...g_k(\bar X_{t_k})}\rr|\leq C_{T,k,g_1,...,g_k}\ll(d_{KR}(\nu_0^N,\bar\nu_0) +\ln N/\sqrt{N}\rr),$$
	for some positive constant $C_{T,k,g_1,...,g_k}.$
\end{thm}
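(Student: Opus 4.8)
The plan is to work conditionally on the environment $\E$, reducing the problem to a comparison of two Markov processes with a coupled environment, and then to integrate over $\E$. First I would fix $T>0$ and establish the result for a single time ($k=1$); the extension to finite-dimensional distributions is handled afterwards by a telescoping/Markov-property argument. For the one-time estimate, the key object is the Trotter-type identity~\eqref{trotter}, which expresses $P^N_{\E,t}g - \bar P^N_{\E,t}g$ as a time integral of $P^N_{\E,t-s}(A^N_\E - \bar A^N_\E)\bar P^N_{\E,s}g$. The strategy is: (i) show that $\bar P^N_{\E,s}g$ is sufficiently smooth in the space variable (a $C^3_b$ bound, uniform in $s\leq T$ and in $\E$), using the regularity of the stochastic flow of $\bar X^N$ guaranteed by Assumption~\ref{hypfun} and Theorem~1.4.1 of \cite{kunita_lectures_1986}, exactly as in Proposition~3.4 of \cite{erny_mean_2022}; (ii) bound $\|(A^N_\E - \bar A^N_\E)h\|_\infty$ for $h\in C^3_b(\r)$ by something like $C\,\|h\|_{3,\infty}(|N^{-1/2}\sum_{j\le N}U_j - W^{[N]}| + 1/\sqrt N)$ plus terms controlled by a priori moment estimates on $X^N$; (iii) insert the coupling bound~\eqref{couplage}, which gives $|N^{-1/2}\sum_j U_j - W^{[N]}|\le K\ln N/\sqrt N$ with $K$ a random variable having exponential moments (Lemma~\ref{momentK}); (iv) since $P^N_{\E,t-s}$ is a contraction on $\|\cdot\|_\infty$ and the time integral runs over $[0,t]\subset[0,T]$, conclude a conditional bound $|\E_\E[g(X^N_t)] - \E_\E[g(\bar X^N_t)]| \le C_{T,g}(\text{something}_\E)(d_{KR}(\nu^N_0,\bar\nu_0) + \ln N/\sqrt N)$, where the $d_{KR}$ term comes from comparing the initial laws $\nu^N_0$ and $\bar\nu_0$ via the Lipschitz regularity of $x\mapsto \bar P^N_{\E,t}g(x)$.

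\textbf{Computing the generator difference.} The heart of step (ii) is the explicit form of $A^N_\E$ and $\bar A^N_\E$. Conditionally on $\E$, the process $X^N$ is piecewise deterministic: for $h\in C^3_b(\r)$,
\begin{equation*}
A^N_\E h(x) = b(x)h'(x) + f(x)\sum_{j=1}^N\left(h\left(x + \tfrac{U_j}{\sqrt N}\right) - h(x)\right),
\end{equation*}
while the limit generator is a diffusion generator
\begin{equation*}
\bar A^N_\E h(x) = b(x)h'(x) + W^{[N]}f(x)h'(x) + \tfrac{\sigma^2}{2}f(x)h''(x).
\end{equation*}
A second-order Taylor expansion of $h(x+U_j/\sqrt N)$ gives $\sum_j(h(x+U_j/\sqrt N)-h(x)) = (N^{-1/2}\sum_j U_j)h'(x) + \tfrac{1}{2N}(\sum_j U_j^2)h''(x) + R_N$, with $|R_N|\le \tfrac{1}{6}\|h'''\|_\infty N^{-3/2}\sum_j |U_j|^3$. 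Hence $A^N_\E h - \bar A^N_\E h = f(x)\big[(N^{-1/2}\sum_j U_j - W^{[N]})h'(x) + (\tfrac1{2N}\sum_j U_j^2 - \tfrac{\sigma^2}{2})h''(x) + R_N\big]$. The first bracket term is handled by~\eqref{couplage}; the second by the law of large numbers for $U_j^2$ — here one needs a quantitative rate, which again follows from the exponential-moment assumption and the Komlós–Major–Tusnády-type control (alternatively $\tfrac1N\sum_j U_j^2 - \sigma^2 = \mathcal{O}(\ln N/\sqrt N)$ a.s. by a standard strong-approximation or Bernstein argument, absorbed into the random prefactor); and $|R_N|$ is $\mathcal O(N^{-1/2})$ a.s. since $\tfrac1N\sum_j|U_j|^3$ is a.s. bounded. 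Collecting, $\|(A^N_\E - \bar A^N_\E)h\|_\infty \le \|f\|_\infty\,\|h\|_{3,\infty}\cdot \Theta_N$ with $\Theta_N$ an $\E$-measurable quantity that is $\mathcal O(\ln N/\sqrt N)$ almost surely, with a random constant having good moments.

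\textbf{From conditional to annealed, and the multi-time case.} After step (iv) I would have, almost surely on $\E$,
\begin{equation*}
\left|\E_\E[g(X^N_t)] - \E_\E[g(\bar X^N_t)]\right| \le C_{T,g}\,\Gamma_\E\left(d_{KR}(\nu^N_0,\bar\nu_0) + \tfrac{\ln N}{\sqrt N}\right),
\end{equation*}
where $\Gamma_\E$ incorporates $K$ and the moment bounds; since $\esp{\Gamma_\E}<\infty$ (exponential moments of $K$, Assumption~\ref{hypinit}), taking expectations over $\E$ and using $\E[\bar X^N_\cdot] $ has the same law as $\E[\bar X_\cdot]$ (because $W^{[N]}\sim\mathcal N(0,\sigma^2)$ for each fixed $N$, by construction of the coupling — this is the crucial place where the annealed, not quenched, nature is used) yields $|\esp{g(X^N_t)} - \esp{g(\bar X_t)}| \le C_{T,g}(d_{KR}(\nu^N_0,\bar\nu_0) + \ln N/\sqrt N)$. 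For the finite-dimensional distributions with $t_1\le\dots\le t_k$, I would proceed by induction on $k$: write $g_1(X^N_{t_1})\cdots g_k(X^N_{t_k})$, condition on $\E$ and on $X^N_{t_{k-1}}$, use the Markov property conditionally on $\E$ to replace the last factor's conditional expectation $\E_\E[g_k(X^N_{t_k})\mid X^N_{t_{k-1}}] = P^N_{\E,t_k-t_{k-1}}g_k(X^N_{t_{k-1}})$ by $\bar P^N_{\E,t_k-t_{k-1}}g_k$ at cost $\mathcal O(\ln N/\sqrt N)$ uniformly in the starting point (the one-time bound with $\delta_x$ initial condition), then absorb the new smooth function $\bar P^N_{\E,t_k-t_{k-1}}g_k \in C^3_b$ into $g_{k-1}$ and iterate; the $d_{KR}(\nu^N_0,\bar\nu_0)$ term appears only at the last step. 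Tightness on $D(\r_+,\r)$ for the convergence in distribution is obtained separately from a.s. bounds on the modulus of continuity of $X^N$ conditionally on $\E$ (Aldous-type criterion, as in \cite{erny_mean_2022}), combined with convergence of finite-dimensional distributions.

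\textbf{Main obstacle.} I expect the principal difficulty to be step (i): obtaining $C^3_b$-regularity of $\bar P^N_{\E,s}g$ with bounds \emph{uniform in $N$ and in the realization of $\E$}, i.e. uniform in the environment variable $W^{[N]}$ which is unbounded. One must check that the stochastic-flow derivative estimates from Kunita's theorem depend on the drift coefficient $b(x)+W^{[N]}f(x)$ only through bounds on its derivatives (which are uniform, since $f^{(k)}$ is bounded and $W^{[N]}$ multiplies it) and not on the sup-norm of the coefficient itself — a point that must be argued carefully, but is plausible because the flow derivatives $\partial_x^n \bar X^{(x),N}_t$ solve linear SDEs whose coefficients involve only derivatives of $b+W^{[N]}f$ and of $\sigma\sqrt f$. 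A secondary technical point is securing the quantitative rate $\tfrac1N\sum_j U_j^2 = \sigma^2 + \mathcal O(\ln N/\sqrt N)$ a.s. with an integrable random constant, which should follow from the same exponential-moment hypothesis via a strong approximation for the partial sums of $U_j^2 - \sigma^2$ (these have exponential moments too).
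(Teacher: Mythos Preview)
Your overall architecture matches the paper's (Trotter identity, generator comparison, semigroup regularity via Kunita's flow theory, induction for the multi-time case, separate tightness), but two unboundedness issues are mishandled. First, $f$ is not assumed bounded --- Assumption~\ref{hypfun} bounds only its derivatives --- so the estimate $\|(A^N_\E-\bar A^N_\E)h\|_\infty\le \|f\|_\infty\|h\|_{3,\infty}\Theta_N$ is unavailable; the generator difference at $x$ carries a factor $f(x)$, which is only sublinear, and the Trotter formula then produces $\int_0^t\mathbb{E}_{\E,\delta_x}[f(X^N_s)]\,ds$. Controlling this uniformly in $N$ (after integrating over the initial law) requires a priori moment estimates on $X^N$ (the paper's Lemma~\ref{apriori}), which are themselves delicate because the conditional Gr\"onwall constant involves $|N^{-1/2}\sum_j U_j|$ and one must appeal to uniform exponential-moment bounds for this quantity (Lemma~\ref{momentsomme}). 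Second, your ``main obstacle'' analysis is off: the derivatives of the drift $b+W^{[N]}f$ are $b^{(k)}+W^{[N]}f^{(k)}$, which are \emph{not} uniformly bounded in $W^{[N]}$. Gr\"onwall on the flow-derivative SDEs yields $\|\bar P^N_{\E,t}g\|_{3,\infty}\le C_t\|g\|_{3,\infty}e^{C_t|W^{[N]}|}$ (this is exactly Lemma~\ref{controlsemigroup}); this exponential factor must be carried through the induction on $k$ and then separated from the $\Theta_N$-type terms via Cauchy--Schwarz when averaging over $\E$, using that $W^{[N]}$ is Gaussian so that $\mathbb{E}[e^{C|W^{[N]}|}]<\infty$.

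The tightness sketch also fails as written: conditionally on $\E$, the drift of $X^N$ contains $(N^{-1/2}\sum_j U_j)f(X^N_t)$, and since $\limsup_N|N^{-1/2}\sum_j U_j|=+\infty$ a.s., conditional modulus-of-continuity bounds are not uniform in $N$, so an Aldous-type criterion applied conditionally does not yield annealed tightness; working with non-conditional stopping times is awkward because they would depend on $\E$ in a nontrivial way. The paper explicitly avoids Aldous for this reason and instead verifies the moment criterion of Theorem~13.5 of \cite{billingsley_convergence_1999}, namely $\mathbb{E}\big[(X^N_s-X^N_r)^2(X^N_t-X^N_s)^2\big]\le C_T(t-r)^{3/2}$, by splitting into drift and compensated-jump parts and once more invoking the uniform a priori estimates of Lemma~\ref{apriori}.
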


The proof of the convergence (and the bound for the convergence speed) of the finite-dimensional distributions of Theorem~\ref{mainresult} is given in Section~\ref{sectionCVfidi}. The (annealed) convergence in distribution of the processes is deduced from it in Section~\ref{sectiontight}.

{
In the proof of Theorem~\ref{mainresult}, we compare the conditional distributions (given the randomness of the environment~$\mathcal{E}$) of~$X^N$ and~$\bar X^N$. This is made possible by the fact that the random environments of the processes are coupled. This type of approach could be used to obtain a quenched convergence of the distributions instead of just an annealed one. However the quenched convergence of the sequence of processes~$(X^N)_N$ cannot hold in our model: we approximate the conditional distribution of~$X^N$ with the one of $\bar X^N$, which still depends on~$N$ (trajectorially w.r.t.~$\mathcal{E}$). The formal problem to obtain a quenched convergence is the a.s. divergence of the sequence of Gaussian variables~$W^{[N]}$. 

However, the aim of the next proposition is to guarantee some "quenched control" between the conditional finite-dimensional distributions of~$X^N$ and~$\bar X^N$

\begin{prop}\label{quenched}
	Grant Assumptions~\ref{hypenv},~\ref{hypfun} and~\ref{hypinit}. The following statement holds true almost surely conditionally on~$\mathcal{E}$:
	 for any $k\in\n^*,T>0,$ $t_1\leq ...\leq t_k\leq T,$ and $g_1,...,g_k\in C^3_b(\r),$
	$$\ll|\espcc{\E}{g_1(X^N_{t_1})...g_k(X^N_{t_k})} - \espcc{\E}{g_1(\bar X^N_{t_1})...g_k(\bar X^N_{t_k})}\rr| = \mathcal{O}\left((\ln N)^{C_{T,k}}\ll[d_{KR}(\nu_0^N,\bar\nu_0) +N^{-1/2}\rr]\right),$$
	with $C_{T,k}$ some positive and deterministic constant that depends only on~$T,k$ and the model parameters.
\end{prop}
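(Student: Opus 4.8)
The plan is to reduce Proposition~\ref{quenched} to a quenched comparison of the Markov semigroups $(P^N_{\E,t})_t$ and $(\bar P^N_{\E,t})_t$, working entirely on the event of full $\mathbb{P}$-probability where the coupling~\eqref{couplage} holds with a finite (random) constant $K$. First I would treat the one-dimensional case $k=1$: fix $g\in C^3_b(\r)$ and write the difference $\espcc{\E}{g(X^N_t)} - \espcc{\E}{g(\bar X^N_t)}$ as the integral of $(P^N_{\E,\nu^N_0} - P^N_{\E,\bar\nu_0})$ plus the integral of $(P^N_{\E,\bar\nu_0} - \bar P^N_{\E,\bar\nu_0})$. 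The first piece is controlled by $d_{KR}(\nu^N_0,\bar\nu_0)$ using the Lipschitz regularity of $x\mapsto P^N_{\E,t}g(x)$ (uniformly in $N$, conditionally on $\E$), which comes from a priori estimates on the stochastic flow as in Proposition~3.4 of~\cite{erny_mean_2022}. The second piece is handled by the Trotter-type identity~\eqref{trotter}:
$$P^N_{\E,t}g(x) - \bar P^N_{\E,t}g(x) = \int_0^t P^N_{\E,t-s}\ll(A^N_\E - \bar A^N_\E\rr)\bar P^N_{\E,s}g(x)\,ds,$$
so everything comes down to (i) a pointwise bound $\|(A^N_\E - \bar A^N_\E)h\|_\infty \leq C\,(\ln N/\sqrt N)\,\|h\|_{3,\infty}$ for smooth $h$, and (ii) propagation of the $C^3_b$-norm along the limit semigroup, i.e.\ $\|\bar P^N_{\E,s}g\|_{3,\infty}\leq C_T\|g\|_{3,\infty}$ for $s\leq T$, again via regularity of the stochastic flow of $\bar X^N$ (Theorem~1.4.1 of~\cite{kunita_lectures_1986}, using Assumption~\ref{hypfun}), and the fact that $P^N_{\E,t-s}$ is a contraction on $C_b(\r)$.

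The key computation is (i). Conditionally on $\E$, the generator $A^N_\E$ acts as $A^N_\E h(x) = b(x)h'(x) + N f(x)\,\esp{h(x + U/\sqrt N) - h(x)}$ where $U\sim\mu$ (the factor $N$ from the jump rate, since each of the $N$ point processes has rate $f$), while $\bar A^N_\E h(x) = b(x)h'(x) + W^{[N]}f(x)h'(x) + \tfrac12\sigma^2 f(x)h''(x)$. Taylor-expanding the jump term to second order gives $N f(x)\big[\tfrac1{\sqrt N}(\esp{U})h'(x) + \tfrac1{2N}\esp{U^2}h''(x) + \mathcal{O}(N^{-3/2}\esp{|U|^3}\|h'''\|_\infty)\big]$; since $\esp U = 0$ and $\esp{U^2}=\sigma^2$, the leading terms are $\tfrac12\sigma^2 f(x)h''(x) + \mathcal{O}(N^{-1/2})$, so the difference $A^N_\E - \bar A^N_\E$ reduces, up to $\mathcal{O}(N^{-1/2}\|h\|_{3,\infty})$, to the drift discrepancy $\big(\tfrac1{\sqrt N}\sum_{j=1}^N U_j - W^{[N]}\big) f(x)h'(x)$ — wait, more precisely the $N^{-1/2}\sum U_j$ appears because the actual SDE~\eqref{XNheuristique} has the drift term $\big(N^{-1/2}\sum_j U_j\big)f(X^N_t)dt$ — and by~\eqref{couplage} this is bounded by $K(\ln N/\sqrt N)\|f\|_\infty\|h'\|_\infty$. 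Here $K$ is the random constant, which is exactly why the bound is $\mathcal{O}(\cdot)$ with a random (not deterministic) prefactor; the deterministic exponent $C_{T,k}$ on $\ln N$ will come only from iterating over the $k$ time-marginals (see below), not from $K$.

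For the multidimensional case $k\geq 2$ I would proceed by a telescoping/recursion argument on the time marginals, conditioning successively: write $\espcc{\E}{g_1(X^N_{t_1})\cdots g_k(X^N_{t_k})} = \espcc{\E}{g_1(X^N_{t_1})\cdots g_{k-1}(X^N_{t_{k-1}})\,(P^N_{\E,t_k-t_{k-1}}g_k)(X^N_{t_{k-1}})}$ and compare with the corresponding expression for $\bar X^N$, splitting the difference into (a) replacing the last factor $P^N_{\E,\cdot}g_k$ by $\bar P^N_{\E,\cdot}g_k$ — controlled by the $k=1$ estimate applied to the test function $g_k$, times $\prod_{i<k}\|g_i\|_\infty$ — plus (b) the difference of the $(k-1)$-fold expectations with last test function $g_{k-1}\cdot(\bar P^N_{\E,t_k-t_{k-1}}g_k)$, to which the induction hypothesis applies. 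The point requiring care is that $\bar P^N_{\E,s}g_k$ need not be $C^3_b$ with norm controlled by $\|g_k\|_{3,\infty}$ alone unless we again invoke flow regularity; granting Assumption~\ref{hypfun} this is fine, and each recursion step multiplies the constant by a factor and the $\ln N$ power increases by a bounded amount, yielding a final $(\ln N)^{C_{T,k}}$ with $C_{T,k}$ deterministic. The main obstacle I anticipate is verifying uniform-in-$N$ (conditional on $\E$) regularity estimates — Lipschitz and higher-derivative bounds on $P^N_{\E,t}g$ — since the jump intensity $Nf$ blows up; these must be obtained from the compensated form~\eqref{XNheuristique}, where the martingale part has vanishing jumps and bounded predictable bracket $\sigma^2 f$, so the flow derivatives satisfy linear SDEs with coefficients bounded uniformly in $N$, the only $N$-dependence entering through $K\ln N/\sqrt N$ in the drift — which is exactly the source of the claimed rate.
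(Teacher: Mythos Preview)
Your overall architecture---Trotter identity~\eqref{trotter} plus telescoping over the $k$ time-marginals---matches the paper, but the mechanism you propose for producing the factor $(\ln N)^{C_{T,k}}$ is wrong, and this reflects a genuine gap.

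You assert that $\|\bar P^N_{\E,s}g\|_{3,\infty}\leq C_T\|g\|_{3,\infty}$ uniformly in $N$, and likewise that $P^N_{\E,t}g$ has Lipschitz constant bounded uniformly in $N$. Neither holds. The limit equation~\eqref{barXw} has drift $b+W^{[N]}f$, and the flow-derivative estimates behind Lemma~\ref{controlsemigroup} give only
\[
\|\bar P^N_{\E,s}g\|_{3,\infty}\leq C_T\,e^{C_T|W^{[N]}|}\,\|g\|_{3,\infty},
\]
with the exponential dependence on $|W^{[N]}|$ unavoidable since $W^{[N]}$ multiplies an unbounded (sublinear, not bounded---note $\|f\|_\infty$ need not be finite under Assumption~\ref{hypfun}) drift term. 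Similarly, the quenched a priori estimates on $X^N$ (see~\eqref{aprioriquenched}) carry a factor $\exp\big(C_T|N^{-1/2}\sum_j U_j|\big)$, because Gr\"onwall is applied conditionally on~$\E$ and the drift coefficient again involves $N^{-1/2}\sum_j U_j$. These exponentials in $|W^{[N]}|$ and $|S_N|$ are the actual source of the polylogarithmic loss: the key missing ingredient is the \emph{law of the iterated logarithm}, which gives $|W^{[N]}|,\ |S_N|\leq 2\sigma\sqrt{\ln\ln N}$ almost surely for $N$ large, whence $e^{C|W^{[N]}|}=\mathcal{O}((\ln N)^{C})$. The exponent $C_{T,k}$ is therefore \emph{not} produced by the recursion over $k$ (that recursion only multiplies constants and stacks factors $e^{C|W^{[N]}|}$), but by converting those exponentials via the LIL.

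A second, smaller issue: your generator formula $A^N_\E h(x)=b(x)h'(x)+Nf(x)\,\esp{h(x+U/\sqrt N)-h(x)}$ averages over $U\sim\mu$, but conditionally on $\E$ the $U_j$ are \emph{fixed}; the correct expression is $f(x)\sum_{j=1}^N[h(x+U_j/\sqrt N)-h(x)]$. The Taylor remainder then produces, besides the coupling term $|S_N-W^{[N]}|\leq K\ln N/\sqrt N$, the terms $|N^{-1}\sum_j U_j^2-\sigma^2|$ and $N^{-3/2}\sum_j|U_j|^3$, which you must control almost surely (again by LIL and the strong law of large numbers respectively), not in expectation.
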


The proof of Proposition~\ref{quenched} is given at Section~\ref{proofquenched}.

Note that, according to Proposition~\ref{quenched}, to have the a.s. vanishing of the difference of the finite-dimensional distributions on any compact interval between the processes~$X^N$ and~$\bar X^N$, one needs to strengthen Assumption~\ref{hypinit}$.1$ by assuming that, for any positive constant~$C>0$,
$$(\ln N)^C d_{KR}(\nu_0^N,\bar\nu_0)\underset{N\rightarrow\infty}{\longrightarrow}0.$$
which can be guaranteed, for example, by imposing that the Kantorovich-Rubinstein metric between $\nu_0^N$ and $\bar\nu_0$ is non-greater that $N^{-\eps}$ for some fixed~$\eps>0$.

{
The conditional tightness of the processes $X^N$ cannot be true in general: in~\eqref{XNheuristique} the two first terms are bounded in $L^p$ (a.s., conditionally on the environment), while the third one is unbounded such that its limits superior and inferior are infinity and minus infinity. In particular, the quenched convergence of~$X^N$ cannot be guaranteed in general. Obtaining a kind of quenched convergence is possible along random subsequences, whose randomness depends only on~$\E$~: for any $\lambda\in\r,$ one can consider an increasing sequence~$(\phi_\lambda(N))_N$ such that
$$W^{[\phi_\lambda(N)]}\underset{N\rightarrow\infty}{\longrightarrow}\lambda~~a.s.$$
which implies, according to semimartingales convergence results (like Theorem~IX.3.48 of \cite{jacod_limit_2003}) applied conditionally on~$\E$, the quenched convergence of~$X^{\phi_\lambda(N)}$ to the process~$\bar X(\lambda)$ defined at~\eqref{barXw}.
}

%

%

\section{Proof of Theorem~\ref{mainresult}: annealed convergence of $(X^N)_N$}
\label{proofmainresult}

In Section~\ref{sectionapriori}, we state a lemma about a priori estimates for the process~$(X^N_t)_t$ that are uniform in~$N$. The fact that we work in a random environment and in normalization~$N^{-1/2}$ makes the proof more tricky than in more usual frameworks. Then, Section~\ref{sectionCVfidi} is dedicated to prove the finite-dimensional convergence in distribution of $(X^N_t)_t$ to $(\bar X_t)_t.$ As explained in the introduction, we begin by controlling the difference of the conditional generators of $X^N$ and $\bar X^N$ to deduce a bound for the difference of the conditional semigroups using~\eqref{trotter}. This allows to control the difference of the conditional one-dimensional distributions of $X^N$ and $\bar X,$ then the control between the conditional finite-dimensional distributions is deduced using usual Markov processes technics. Afterwards, the (non-conditional) finite-dimensional convergence in distribution is obtained by integrating over the environment in the control previously obtained. Finally in Section~\ref{sectiontight}, we deduce the convergence in distribution of the processes from the convergence of their finite-dimensional distributions using Theorem~13.5 of \cite{billingsley_convergence_1999}. Section~\ref{sectionproofmain} is a mere summary of the results of the two previous sections.

\subsection{A priori estimates}\label{sectionapriori}

Since we work in a random environment, we need, in a first time, to work conditionally on the environment to obtain some a priori estimates. However, in our framework, we need to take precautions to prevent the problematic term $N^{-1/2}\sum_{j=1}^N|U_j|$ from appearing, and to control instead $|N^{-1/2}\sum_{j=1}^N U_j|$. In addition, we need to apply Gr\"onwall's lemma conditionally on the environment, which makes arise the previous term within an exponential. The next lemma (whose proof is postponed to Appendix~\ref{sectionsomme}) provides a suitable control for this exponential.

\begin{lem}\label{momentsomme}
	Let $U_i$ ($i\geq 1$) be i.i.d. centered random variables having some finite exponential moments. Let $S_N$ be defined as
	$$S_N := \frac{1}{\sqrt{N}}\sum_{i=1}^N U_i.$$
	
	Then, for all $\gamma>0,$ there exists some $N_\gamma\in\n^*$ such that,
	$$\underset{N\geq N_\gamma}{\sup}~\esp{e^{\gamma|S_N|}} <\infty.$$
	
	As a consequence, for any $p\in\n^*,$ 
	$$\underset{N\geq 1}{\sup}~\esp{|S_N|^p}<\infty.$$
\end{lem}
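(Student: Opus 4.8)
The plan is to establish the uniform exponential moment bound first, and then derive the polynomial moment bounds as an easy corollary. For the main claim, I would exploit the classical Cramér-type argument: since $U_1$ has a finite exponential moment $\int e^{\alpha|x|}d\mu(x)<\infty$ for some $\alpha>0$, the moment generating function $\varphi(\theta):=\esp{e^{\theta U_1}}$ is finite for $|\theta|\leq\alpha$, and since $U_1$ is centered with variance $\sigma^2$, one has $\varphi(\theta)=1+\sigma^2\theta^2/2+o(\theta^2)$ near $0$, so there is a constant $c>0$ and a radius $\theta_0\in(0,\alpha]$ with $\varphi(\theta)\leq e^{c\theta^2}$ for all $|\theta|\leq\theta_0$. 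Then by independence, $\esp{e^{\theta\sqrt{N}S_N}}=\esp{e^{\theta\sum_{i=1}^N U_i}}=\varphi(\theta)^N\leq e^{cN\theta^2}$ for $|\theta|\leq\theta_0$. Writing $\theta=\lambda/\sqrt{N}$ and restricting to $N$ large enough that $\gamma/\sqrt{N}\leq\theta_0$ (this is where $N_\gamma$ enters), I get $\esp{e^{\gamma S_N}}=\esp{e^{(\gamma/\sqrt N)\sqrt N S_N}}\leq e^{c\gamma^2}$, and symmetrically $\esp{e^{-\gamma S_N}}\leq e^{c\gamma^2}$; since $e^{\gamma|S_N|}\leq e^{\gamma S_N}+e^{-\gamma S_N}$, this yields $\sup_{N\geq N_\gamma}\esp{e^{\gamma|S_N|}}\leq 2e^{c\gamma^2}<\infty$.

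For the polynomial moment consequence, I would fix $p\in\n^*$ and note that $|S_N|^p\leq C_p\,e^{|S_N|}$ for a constant $C_p$ depending only on $p$ (since $x^p\leq p!\,e^x$ for $x\geq 0$). Applying the exponential bound with $\gamma=1$ gives $\sup_{N\geq N_1}\esp{|S_N|^p}\leq C_p\sup_{N\geq N_1}\esp{e^{|S_N|}}<\infty$. The finitely many remaining terms $\esp{|S_N|^p}$ for $1\leq N<N_1$ are each finite because $U_1$ has finite moments of all orders (it has a finite exponential moment), so $S_N$ does too; taking the maximum over these finitely many values together with the tail bound gives $\sup_{N\geq 1}\esp{|S_N|^p}<\infty$.

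The only mildly delicate point is the quadratic upper bound $\varphi(\theta)\leq e^{c\theta^2}$ on a fixed neighborhood of the origin; I would justify it by a second-order Taylor expansion of $\theta\mapsto\varphi(\theta)$ (valid and smooth on $(-\alpha,\alpha)$ by differentiation under the integral sign, using the exponential moment to dominate), using $\varphi(0)=1$, $\varphi'(0)=\esp{U_1}=0$, and $\varphi''$ locally bounded, so that $\varphi(\theta)\leq 1+c'\theta^2\leq e^{c'\theta^2}$ for $|\theta|$ small. I do not anticipate any genuine obstacle here — the statement is a standard tightness-of-CLT-normalized-sums fact, and the proof is a routine Chernoff/Cramér computation. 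If one prefers to avoid Taylor expansions one can instead invoke that $(\sqrt N S_N)$ has i.i.d.\ summands with a finite exponential moment and cite a Bernstein-type inequality directly, but the self-contained argument above is short enough that I would simply include it.
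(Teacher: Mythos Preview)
Your proof is correct and follows the standard Cram\'er/Chernoff route: bound the moment generating function $\varphi(\theta)$ quadratically near the origin via a second-order Taylor expansion, then exploit independence to get $\varphi(\gamma/\sqrt{N})^N\leq e^{c\gamma^2}$ uniformly in large $N$. This is entirely self-contained and elementary.

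The paper takes a genuinely different route. Rather than working directly with the MGF of $U_1$, it leverages the KMT coupling already constructed in Appendix~\ref{couplingproof}: writing $e^{\gamma|S_N|}\leq e^{\gamma|S_N-W^{[N]}|}e^{\gamma|W^{[N]}|}$ and using $|S_N-W^{[N]}|\leq K\ln N/\sqrt{N}$, the bound reduces to the exponential moments of the Gaussian $W^{[N]}$ (trivially finite, uniformly in $N$) and of the coupling constant $K$ (controlled by Lemma~\ref{momentK}); the threshold $N_\gamma$ arises from requiring $2\gamma\ln N/\sqrt{N}<\lambda$. The polynomial-moment consequence is then derived exactly as you do, via $x^p\leq (p!/\beta^p)e^{\beta x}$.

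Your argument is more direct and avoids any appeal to the Koml\'os--Major--Tusn\'ady machinery, which is overkill for this lemma in isolation. The paper's approach, on the other hand, is a natural byproduct of infrastructure already in place for the main results, so it costs essentially nothing in context. Either proof is perfectly acceptable; yours would be the textbook one if the lemma were stated outside this paper.
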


\begin{rem}\label{remmomentsomme}
	The second part of Lemma~\ref{momentsomme} is easy to prove with straightforward computation. In the proof of Lemma~\ref{momentsomme}, we prefer to use the first part of the lemma to prove the second one, since it is a straightforward consequence. Note also that it is necessary to consider some $N_\gamma$ in the first statement (but not in the second one) since $S_1 := U_1$ is not assumed to admit all exponential moments.
\end{rem}

The next lemma states the a priori estimates we need to prove our main result. Its proof is postponed to Appendix~\ref{sectionaprioriproof}.

\begin{lem}\label{apriori}
	Grant Assumption~\ref{hypenv}. Assume that $b$ and $f$ are measurable and sublinear. Then, any solution~$(X^N_t)_t$ of~\eqref{XNu}$-$\eqref{XN} satisfies a priori the following: for every $p\in\n^*,$ if $X^N_0$ admits a finite $2p-$order moment, then for all $T>0,$ there exists some $N_{T,p}\in\n^*$ such that
	\begin{itemize}
		\item[$(i)$] $$\underset{N\geq N_{T,p}}{\sup}~\underset{t\leq T}{\sup}~\esp{\ll(X^N_t\rr)^{2p}}\leq C_{T,p}\ll(1 + \esp{(X^N_0)^{2p}}\rr),$$
		\item[$(ii)$] and, if $p\geq 2,$ then for any $2\leq \kappa <2p,$ $$\underset{N\geq N_{T,p}}{\sup}~\esp{\underset{t\leq T}{\sup}~\ll|X^N_t\rr|^{\kappa}}<\infty.$$
	\end{itemize}
\end{lem}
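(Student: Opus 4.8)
The plan is to establish the a priori estimates by working conditionally on the environment $\E$, applying stochastic calculus to powers of $X^N$, and then integrating over $\E$ using Lemma~\ref{momentsomme} to control the exponential terms that arise from Gr\"onwall's inequality. First I would rewrite the SDE \eqref{XNu} in the compensated form \eqref{XNheuristique}, so that the jump term splits into a martingale part (with respect to $\mathbb{P}_\E$) and a drift term of the form $S_N f(X^N_t)\,dt$ where $S_N = N^{-1/2}\sum_j U_j$. Conditionally on $\E$, the process $X^N$ is a piecewise-deterministic Markov process and I can apply It\^o's formula for jump processes to $t\mapsto (X^N_t)^{2p}$. The bounded-variation part contributes terms controlled by $|b(X^N_t)|\,|X^N_t|^{2p-1} \leq C(1+(X^N_t)^{2p})$ and by $|S_N|\, f(X^N_t)\, |X^N_t|^{2p-1}\leq C|S_N|(1+(X^N_t)^{2p})$, using sublinearity of $b$ and $f$. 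The jump part, after compensation, yields a $\mathbb{P}_\E$-martingale plus a compensator term: the $k$-th jump of $X^N$ has height $U_j/\sqrt{N}$, and summing the increments $(x+h)^{2p}-x^{2p}-2p\,x^{2p-1}h$ over the jump measure produces, via the binomial expansion, terms $\sum_{\ell=2}^{2p}\binom{2p}{\ell} N^{-\ell/2}(X^N_t)^{2p-\ell}\,\mathbb{E}[U^\ell]\,f(X^N_t)$, each bounded by $C_p N^{-1}(1+(X^N_t)^{2p})$ for $N$ large (the $N^{-\ell/2}$ with $\ell\geq 2$ is the crucial gain from the normalization $N^{-1/2}$). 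Taking $\mathbb{E}_\E$ kills the martingale and leaves a Gr\"onwall-type inequality
$$\mathbb{E}_\E\ll[(X^N_t)^{2p}\rr] \leq \mathbb{E}_\E\ll[(X^N_0)^{2p}\rr] + C_p\int_0^t (1+|S_N|)\,\mathbb{E}_\E\ll[(X^N_s)^{2p}\rr]\,ds,$$
so that $\mathbb{E}_\E[(X^N_t)^{2p}] \leq (1+\mathbb{E}_\E[(X^N_0)^{2p}])\,e^{C_p(1+|S_N|)T}$.

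To get $(i)$, I would then take the full expectation: by independence of the Poisson measures and initial condition from $\E$, one has $\mathbb{E}[(X^N_0)^{2p}] = \mathbb{E}_\E[(X^N_0)^{2p}]$, and
$$\sup_{t\leq T}\mathbb{E}\ll[(X^N_t)^{2p}\rr] \leq \ll(1+\esp{(X^N_0)^{2p}}\rr)\,e^{C_p T}\,\esp{e^{C_p T|S_N|}}.$$
By Lemma~\ref{momentsomme} (first part, with $\gamma = C_p T$), there is $N_{T,p}$ such that $\sup_{N\geq N_{T,p}}\mathbb{E}[e^{C_p T |S_N|}] < \infty$, which gives $(i)$ with the constant $C_{T,p}$ absorbing this supremum. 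The need for $N_{T,p}$ rather than a bound uniform over all $N\geq 1$ is exactly the issue flagged in Remark~\ref{remmomentsomme}: $S_1 = U_1$ need not have all exponential moments.

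For $(ii)$, I would not directly estimate the supremum inside the expectation via the exponential bound (which would cost an $L^\infty$ control of $S_N$), but instead use the Burkholder--Davis--Gundy inequality on the $\mathbb{P}_\E$-martingale part. Writing $X^N_t = X^N_0 + \int_0^t b + \int_0^t S_N f + M^N_t$ with $M^N$ the compensated jump martingale, for $\kappa < 2p$ one bounds $\mathbb{E}[\sup_{t\leq T}|X^N_t|^\kappa]$ by the $\kappa$-th moments of each piece. The drift terms are handled by H\"older in time plus the already-established $(i)$, producing a factor involving $\esp{|S_N|^{\kappa}\cdot(\text{moment of }X^N)}$; here I would use a Cauchy--Schwarz split between $|S_N|$ (all polynomial moments finite by the second part of Lemma~\ref{momentsomme}) and $(X^N_s)^{2p'}$ for a suitable $p' < p$, which is why one requires the \emph{strict} inequality $\kappa < 2p$ — it leaves enough integrability margin so that neither factor needs the borderline $2p$-th moment. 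The BDG bound on $\sup_t|M^N_t|^\kappa$ reduces to the $\kappa/2$-th moment of the quadratic variation $[M^N]_T = \sum_j N^{-1} U_j^2 \Delta Z^{N,j}$, whose compensator is $N^{-1}\sum_j U_j^2 \int_0^T f(X^N_s)\,ds$; taking expectations and again splitting $U_j^2$ from the (sublinearly controlled) $f(X^N_s)$ via independence and $(i)$ closes the estimate, uniformly in $N\geq N_{T,p}$.

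The main obstacle is the careful bookkeeping in step one, ensuring that every occurrence of the environment variables enters as $|S_N| = |N^{-1/2}\sum_j U_j|$ (which has good moments by Lemma~\ref{momentsomme}) and never as the pathological $N^{-1/2}\sum_j|U_j|$ (which grows like $\sqrt N$); this is delicate precisely in the Gr\"onwall step, where the linear coefficient sits inside an exponential, so one must verify that the compensator of the jump term — not just its drift — also only produces $|S_N|$-type or $O(N^{-1})$-type contributions after the binomial expansion, exploiting the centering $\mathbb{E}[U] = 0$ to cancel the $\ell = 1$ term and the normalization to make the $\ell\geq 2$ terms summable with a $1/N$ factor. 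The secondary subtlety, in $(ii)$, is choosing the exponents in the Cauchy--Schwarz / H\"older splits so that the condition $\kappa < 2p$ is genuinely used and no moment beyond those assumed on $X^N_0$ (and provided by $(i)$) is needed.
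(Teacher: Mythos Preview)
Your overall strategy for both parts is right and close to the paper's, but there is a genuine gap in your treatment of the $\ell\geq 2$ compensator terms in part~$(i)$.

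Conditionally on $\E$, the $U_j$ are frozen, so the compensator of the jump part of $(X^N)^{2p}$ produces
\[
\sum_{\ell=2}^{2p}\binom{2p}{\ell}(X^N_s)^{2p-\ell}\Bigl(N^{-\ell/2}\sum_{j=1}^N U_j^\ell\Bigr)f(X^N_s),
\]
and \emph{not} the expression with $\esp{U^\ell}$ that you wrote (that would be correct in a model where the $U_j$ are i.i.d.\ marks of the Poisson measures, as in \cite{erny_mean_2022}, but here they are fixed environment variables). For $\ell=2$ the coefficient $N^{-1}\sum_j U_j^2$ is $\E$-measurable and of order~$1$, not $O(N^{-1})$; your claim that ``the $\ell\geq 2$ terms [are] summable with a $1/N$ factor'' is therefore false. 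This matters: if that random coefficient stays in the Gr\"onwall rate, after exponentiation you would need $\esp{\exp\bigl(\gamma\, N^{-1}\sum_j U_j^2\bigr)}<\infty$, and this fails under Assumption~\ref{hypenv} whenever $U_1^2$ has no exponential moment (e.g.\ $U_1$ Laplace).

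The fix --- which you correctly sense is the crux but misdiagnose --- is that the $\ell\geq 2$ terms multiply $|X^N_s|$ to a power at most $2p-1$ (after using sublinearity of $f$), so Young's inequality $a\,b^{2p-1}\leq C_p(a^{2p}+b^{2p})$ moves the random factor $a=(1+N^{-1}\sum_j|U_j|^{2p})$ into the \emph{source} term of the Gr\"onwall (where only polynomial moments are needed: $\esp{N^{-1}\sum_j|U_j|^{4p^2}}=\esp{|U_1|^{4p^2}}<\infty$), leaving a deterministic $C_p$ in the rate from those terms. Only the $\ell=1$ term, which genuinely multiplies $|X^N_s|^{2p}$, is forced into the rate and contributes the $|S_N|$ you wrote. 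This is exactly the ``delicate'' point you flag in your last paragraph, but the resolution is Young, not an $N^{-1}$ gain. Your sketch for part~$(ii)$ is essentially the paper's argument.
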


\subsection{Convergence of the finite-dimensional distributions}\label{sectionCVfidi}

The goal of this section is to prove the convergence speed stated in Theorem~\ref{mainresult}, namely: for any $k\in\n^*,T>0,t_1\leq ...\leq t_k\leq T,$ and $g_1,...,g_k\in C^3_b(\r),$
\begin{equation}\label{CVfidi}
\ll|\esp{g_1(X^N_{t_1})...g_k(X^N_{t_k})} - \esp{g_1(\bar X_{t_1})...g_k(\bar X_{t_k})}\rr|\leq C_{T,k,g_1,...,g_k}\ll(d_{KR}(\nu_0^N,\bar\nu_0) + \ln N/\sqrt{N}\rr),
\end{equation}
for some positive constant $C_{T,k,g_1,...,g_k}.$

Before proving~\eqref{CVfidi}, let us state two useful Lemmas. The first one allows to prove that the limit conditional semigroup is $C^3$ and gives a control of its derivatives. Its proof is postponed to Appendix~\ref{proofcontrolsemi}.

\begin{lem}\label{controlsemigroup}
	Grant Assumption~\ref{hypfun}. Then, for all $t\geq 0,$ the function $x\in\r\mapsto \bar P_{\E,t}g(x)$ belongs to $C^3_b(\r)$ and
	$$\ll|\ll|\bar P_{\E,t}g\rr|\rr|_{3,\infty}\leq C_t ||g||_{3,\infty}e^{C_t|W|},$$
	where $W$ is the Gaussian variable in~\eqref{barXw}$-$\eqref{barX}.
\end{lem}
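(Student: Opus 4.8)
The plan is to express $\bar P_{\E,t}g(x)$ via the stochastic flow of $(\bar X^N_t)_t$ and differentiate under the conditional expectation. Conditionally on $\E$ (in particular, with the Gaussian variable $W = W^{[N]}$ fixed), the process $(\bar X^N_t(w))_t$ from~\eqref{barXw} is a diffusion with coefficients $b(\cdot) + wf(\cdot)$ and $\sigma\sqrt{f(\cdot)}$, and under Assumption~\ref{hypfun} these coefficients are $C^3$ with bounded derivatives up to order~$3$ (for the drift, bounded derivatives of $b$ and $f$; for the diffusion coefficient, bounded derivatives of $\sqrt f$). By Theorem~1.4.1 of \cite{kunita_lectures_1986} (cited earlier), the stochastic flow $x\mapsto \bar X^{N,(x)}_t$ is then three times differentiable in $x$, and one may write $\bar P_{\E,t}g(x) = \espcc{\E}{g\ll(\bar X^{N,(x)}_t\rr)}$ and differentiate inside: for instance $\partial_x \bar P_{\E,t}g(x) = \espcc{\E}{g'\ll(\bar X^{N,(x)}_t\rr)\partial_x\bar X^{N,(x)}_t}$, with analogous (longer) expressions for the second and third derivatives involving $g'',g'''$ and the derivatives $\partial_x^k\bar X^{N,(x)}_t$ for $k\leq 3$. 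This part is essentially identical to the proof of Proposition~3.4 of \cite{erny_mean_2022}, as the authors indicate.

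Next I would control the moments of the flow derivatives, conditionally on $\E$, keeping track of the dependence on $|W|$. Differentiating the SDE~\eqref{barXw} in $x$ yields linear SDEs for $\partial_x\bar X^{N,(x)}_t$: writing $Y_t := \partial_x\bar X^{N,(x)}_t$, one has $dY_t = Y_t\,b'(\bar X^{N,(x)}_t)\,dt + W\,Y_t\,f'(\bar X^{N,(x)}_t)\,dt + \sigma\,Y_t\,(\sqrt f)'(\bar X^{N,(x)}_t)\,dB_t$, so $Y_t$ is an exponential-type functional. Applying Itô's formula to $|Y_t|^2$ (or to $|Y_t|^p$), using the boundedness of $b',f',(\sqrt f)'$, and then conditional Gr\"onwall, one gets $\espcc{\E}{|Y_t|^p}\leq e^{C_t(1+|W|)p}$; the factor $|W|$ enters linearly because $W$ multiplies the bounded coefficient $f'$ in the drift. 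The second and third derivatives $\partial_x^2\bar X^{N,(x)}_t$, $\partial_x^3\bar X^{N,(x)}_t$ solve analogous linear SDEs whose inhomogeneous terms are polynomial in the lower-order derivatives with bounded ($W$-affine) coefficients; iterating the same Itô/Gr\"onwall argument gives conditional-$L^p$ bounds of the same form $C_t\,e^{C_t|W|}$. Plugging these into the differentiated expressions for $\partial_x^k\bar P_{\E,t}g(x)$ and using $|g^{(k)}|\leq \|g\|_{3,\infty}$ and H\"older's inequality (in the conditional expectation) yields $\ll|\partial_x^k\bar P_{\E,t}g(x)\rr|\leq C_t\|g\|_{3,\infty}e^{C_t|W|}$ uniformly in $x$, for $k=0,1,2,3$, and summing over $k$ gives the stated bound on $\ll|\ll|\bar P_{\E,t}g\rr|\rr|_{3,\infty}$.

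The main obstacle is bookkeeping the $|W|$-dependence cleanly through the repeated Gr\"onwall steps for the higher-order flow derivatives: the third-derivative equation has a source term that is a polynomial combination of $Y_t$, $\partial_x^2\bar X^{N,(x)}_t$ and the coefficients' higher derivatives, all carrying $W$-affine factors, so one must be careful that the exponential rate stays linear in $|W|$ (rather than, say, quadratic) when estimating products via H\"older — this is handled by allocating conjugate exponents so that each factor contributes an $e^{C_t|W|}$ and the finitely many such factors multiply to another $e^{C_t|W|}$. A secondary technical point is justifying the differentiation under the conditional expectation sign, which follows from the $L^p$ bounds on the flow derivatives together with the dominated convergence theorem, exactly as in \cite{erny_mean_2022}; since that reference is explicitly invoked, I would simply cite it rather than redo it. Everything here is done $\omega$-by-$\omega$ in the environment, so no integration over $\E$ is needed at this stage.
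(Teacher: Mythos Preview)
Your proposal is correct and follows essentially the same approach as the paper's own proof: invoke Kunita's flow-regularity theorem (conditionally on $\E$), write the derivatives of $\bar P_{\E,t}g$ via the flow derivatives, and control the $2p$-th conditional moments of $\partial_x^k\bar X^{(x)}_t$ by It\^o's formula plus Gr\"onwall, tracking the linear-in-$|W|$ exponential rate. One minor point: the paper uses the full $C^4$ regularity from Assumption~\ref{hypfun} (not just $C^3$) to apply Kunita's theorem and obtain a $C^3$ flow, and it justifies differentiation under the expectation via the uniform-in-$x$ $L^2$ bound on the flow derivatives (citing \cite{erny_mean_2022-1}) rather than a pointwise domination; otherwise your outline matches the paper's proof.
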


\begin{rem}
	In the statement of Lemma~\ref{controlsemigroup}, we use the version of the limit process~$(\bar X_t)_t$ with a generic Gaussian variable~$W$ for the sake of notation. However, in the proof below, we use this lemma with the version of the limit process~$(\bar X^N_t)_t$ defined w.r.t. the Gaussian variables $W^{[N]}$ ($N\in\n^*$).
\end{rem}

The next lemma recalls the formula~\eqref{trotter}. It permits to deduce the convergence of the (conditional) semigroups from the convergence of their (conditional) generators. Since in the formula we condition by the environment, the proof of this lemma is exactly the same as the one of Proposition~$B.2$ of \cite{erny_mean_2022}. This proof is therefore omitted

\begin{lem}\label{lemmetrotter}
	If Assumption~\ref{hypfun} holds true, then for all $t\geq 0,x\in\r$ and~$g\in C^3_b(\r),$
	$$P^N_{\E,t}g(x) - \bar P^N_{\E,t}g(x) = \int_0^t P^N_{\E,t-s}\ll(A^N_\E - \bar A^N_\E\rr)\bar P^N_{\E,s}g(x)ds.$$
\end{lem}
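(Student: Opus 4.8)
Everything is done conditionally on the environment $\E$, under which $(X^N_t)_t$ is a piecewise deterministic Markov process and $(\bar X^N_t)_t$ is the diffusion~\eqref{barXw} with the (now deterministic) coefficient $W^{[N]}$; in particular the objects $P^N_{\E,t}$, $\bar P^N_{\E,t}$, $A^N_\E$, $\bar A^N_\E$ are the genuine semigroups and extended generators of these two (conditionally) Markovian dynamics, so the statement is a purely ``deterministic'' Trotter--Kato identity and the argument is identical to that of Proposition~B.2 of \cite{erny_mean_2022}. The plan is the classical telescoping argument: fix $t\geq 0$, $x\in\r$, $g\in C^3_b(\r)$, and consider
$$\phi(s) := P^N_{\E,t-s}\bar P^N_{\E,s}g(x),\qquad s\in[0,t].$$
Since $P^N_{\E,0}$ and $\bar P^N_{\E,0}$ are the identity, $\phi(0) = P^N_{\E,t}g(x)$ and $\phi(t) = \bar P^N_{\E,t}g(x)$, so it suffices to show that $\phi$ is absolutely continuous on $[0,t]$ with
$$\phi'(s) = -\,P^N_{\E,t-s}\ll(A^N_\E - \bar A^N_\E\rr)\bar P^N_{\E,s}g(x)$$
for a.e.\ $s$, and then integrate.

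To compute $\phi'$ I would use the two defining relations of the (extended) generators recalled in Section~\ref{notation}: for a sufficiently smooth $h$, $\partial_r \bar P^N_{\E,r}h = \bar P^N_{\E,r}\bar A^N_\E h = \bar A^N_\E \bar P^N_{\E,r}h$ and $\partial_r P^N_{\E,r}h = P^N_{\E,r}A^N_\E h = A^N_\E P^N_{\E,r}h$. Writing, for small $\eps$,
$$\phi(s+\eps)-\phi(s) = P^N_{\E,t-s-\eps}\ll(\bar P^N_{\E,s+\eps}g - \bar P^N_{\E,s}g\rr)(x) + \ll(P^N_{\E,t-s-\eps} - P^N_{\E,t-s}\rr)\bar P^N_{\E,s}g(x),$$
the first term contributes $P^N_{\E,t-s}\bar A^N_\E\bar P^N_{\E,s}g(x)$ and the second contributes $-A^N_\E P^N_{\E,t-s}\bar P^N_{\E,s}g(x) = -P^N_{\E,t-s}A^N_\E\bar P^N_{\E,s}g(x)$ in the limit $\eps\to0$; subtracting gives the asserted $\phi'(s)$. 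Then
$$P^N_{\E,t}g(x)-\bar P^N_{\E,t}g(x) = \phi(0)-\phi(t) = -\int_0^t\phi'(s)\,ds = \int_0^t P^N_{\E,t-s}\ll(A^N_\E-\bar A^N_\E\rr)\bar P^N_{\E,s}g(x)\,ds,$$
which is the claim.

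The content of the proof is entirely in justifying these manipulations, and this is where the hypotheses enter. First, one needs $\bar P^N_{\E,s}g$ to lie in a class on which both generators act: Lemma~\ref{controlsemigroup} gives exactly this, $\bar P^N_{\E,s}g\in C^3_b(\r)$ with a norm bound that is locally bounded in $s\in[0,t]$ (with the Gaussian variable there taken to be $W^{[N]}$, as in the Remark following Lemma~\ref{controlsemigroup}). For such a function $\bar A^N_\E$ (a second-order operator with coefficients $b,f,\sqrt f$) and $A^N_\E$ (drift plus the finite jump operator $f(x)\sum_{j=1}^N[\,\cdot\,(x+U_j/\sqrt N)-\,\cdot\,(x)]$) are well-defined in the extended sense of Appendix~A of \cite{erny_mean_2022}; the possible unboundedness of $b,f$ is harmless because, conditionally on $\E$, $X^N$ and $\bar X^N$ have all the finite-time moments needed (Lemma~\ref{apriori}, together with Assumption~\ref{hypfun}), so $P^N_{\E,t-s}$ applied to the resulting (at most polynomially growing) functions is finite and the identities $\partial_r P_{\E,r}h = P_{\E,r}A_\E h = A_\E P_{\E,r}h$ hold. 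Second, one needs the map $s\mapsto P^N_{\E,t-s}(A^N_\E-\bar A^N_\E)\bar P^N_{\E,s}g(x)$ to be integrable on $[0,t]$: this follows from the continuity of $s\mapsto\bar P^N_{\E,s}g$ in $C^3_b$, the continuity of $r\mapsto P^N_{\E,r}$ on the relevant functions, and the locally uniform bound from Lemma~\ref{controlsemigroup}.

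The main obstacle is thus not the algebra but the regularity bookkeeping: making rigorous the differentiability of $\phi$ and the interchange of the two semigroups with the generators, in a setting where the limit semigroup must be smoothing (to allow $A^N_\E$ and $\bar A^N_\E$ to be applied to $\bar P^N_{\E,s}g$) and the coefficients are only sublinear. Since this is precisely the situation already treated conditionally on $\E$ in Proposition~B.2 of \cite{erny_mean_2022} (itself an instance of Lemma~1.6.2 of \cite{ethier_markov_2005}), I would simply invoke that argument verbatim, the only change being the identification of the two Markov generators with $A^N_\E$ and $\bar A^N_\E$; this is why the paper omits the details.
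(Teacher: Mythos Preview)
Your proposal is correct and matches the paper's approach exactly: the paper omits the proof entirely, stating that ``since in the formula we condition by the environment, the proof of this lemma is exactly the same as the one of Proposition~B.2 of \cite{erny_mean_2022},'' and your reconstruction of the classical Trotter--Kato telescoping argument via $\phi(s)=P^N_{\E,t-s}\bar P^N_{\E,s}g(x)$, together with the regularity input from Lemma~\ref{controlsemigroup}, is precisely that argument. One small refinement: for the conditional moment bounds you invoke via Lemma~\ref{apriori}, the relevant estimate is the quenched inequality~\eqref{aprioriquenched} rather than the annealed statement of the lemma itself.
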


Using the two previous lemmas, we obtain the covergence speed of the finite-dimensional distribution of Theorem~\ref{mainresult}.
\begin{proof}[Proof of~\eqref{CVfidi}]
	{\it Step~1.} In this first step, we study the convergence of the one-dimensional distributions of the process $(X^N_t)_t$ to the limit process~$(\bar X^N_t)_t$, conditionally on the environment~$\E.$ Recall that conditionally on~$\E,$ these processes are Markov processes, hence their conditional one-dimensional distributions are characterized by the (conditional) semigroups of the processes. We note
	$$P^N_{\E,t}g(x) := \espcc{\E,\delta_x}{g(X^N_t)}\textrm{ and }\bar P^N_{\E,t}g(x):=\espcc{\E,\delta_x}{g(\bar X^N_t)}$$
	these semigroups.
	
	We have for any $g\in C^1_b(\r),x\in\r,$
	$$A^N_\E g(x) = b(x)g'(x) + f(x)\sum_{j=1}^N\ll[g\ll(x+\frac{U_j}{\sqrt{N}}\rr) - g(x)\rr],$$
	
	and for $g\in C^2_b(\r),x\in\r,$
	$$\bar A^N_\E g(x) = b(x)g'(x) + W^{[N]}f(x)g'(x) + \frac12 \sigma^2 f(x) g''(x).$$
	
	As a consequence, for all $g\in C^3_b(\r),x\in\r,$
	\begin{align*}
	\ll|A^N_\E g(x) - \bar A^N_\E g(x)\rr| \leq & f(x)\sum_{j=1}^N\ll|g\ll(x+\frac{U_j}{\sqrt{N}}\rr) - g(x) - \frac{U_j}{\sqrt{N}}g'(x) - \frac{U_j^2}{2N}g''(x)\rr|\\
	&+f(x)g'(x)\ll|\frac1{\sqrt{N}}\sum_{j=1}^N U_j - W^{[N]}\rr| + \frac12 f(x)g''(x)\ll|\frac1N\sum_{j=1}^N U_j^2 - \sigma^2\rr|\\
	\leq& f(x)||g||_{3,\infty}\ll(\frac{1}{6N\sqrt{N}} \sum_{j=1}^N|U_j|^3 + K\frac{\ln N}{\sqrt{N}} + \frac12\ll|\frac1N\sum_{j=1}^N U_j^2 - \sigma^2\rr|\rr).
	\end{align*}
	
	By Lemma~\ref{lemmetrotter}, for any $g\in C^3_b(\r),x\in\r,t\geq 0,$
	$$P^N_{\E,t}g(x) - \bar P^N_{\E,t}g(x) = \int_0^t P^N_{\E,t-s}\ll(A^N_\E - \bar A^N_\E\rr)\bar P^N_{\E,s}g(x)ds,$$
	whence
	\begin{multline*}
	\ll|P^N_{\E,t}g(x) - \bar P^N_{\E,t}g(x)\rr|\leq \int_0^t \espcc{\E,x}{\ll|A^N_\E \bar P^N_{\E,s}g(X^N_{t-s}) - \bar A^N_\E\bar P^N_{\E,s}g(X^N_{t-s})\rr|}ds\\
	\leq \int_0^t\ll|\ll| P^N_{\E,s}g\rr|\rr|_{3,\infty} \espcc{\E,x}{f(X^N_{t-s})}\ll(\frac{1}{6N\sqrt{N}}\sum_{j=1}^N|U_j|^3  + K\frac{\ln N}{\sqrt{N}} + \frac12\ll|\frac1N\sum_{j=1}^N U_j^2 - \sigma^2\rr|\rr)ds.
	\end{multline*}
	
	Using Lemma~\ref{controlsemigroup},
	\begin{align}
	&\ll|P^N_{\E,t}g(x) - \bar P^N_{\E,t}g(x)\rr|\nonumber\\
	&\leq C_t\ll(\int_0^t \espcc{\E,\delta_x}{f(X^N_s)}ds\rr) e^{C_t|W^{[N]}|}||g||_{3,\infty}\ll(\frac1{N\sqrt{N}}\sum_{j=1}^N|U_j|^3 + K\frac{\ln N}{\sqrt{N}} + \frac12\ll|\frac1N\sum_{j=1}^N U_j^2 - \sigma^2\rr|\rr)\nonumber\\
	&=:||g||_{3,\infty}\eps_N(t,x).\label{diffsemigroup}
	\end{align}
	
	{\it Step~2.} Let us now show that the control between the semigroups above can be extended to any finite-dimensional distributions of the processes, conditionally on the environment. The result of this second step (which is formally~\eqref{quenchedfidi}), will be used in the proof of Proposition~\ref{quenched} later.
	

	To begin with
	\begin{multline*}
	\ll|\espcc{\E}{g_1(X^N_{t_1})...g_k(X^N_{t_k})} - \espcc{\E}{g_1(\bar X^N_{t_1})...g_k(\bar X^N_{t_k})}\rr| \\
	= \ll|\espcc{\E,\nu^N_0}{g_1(X^N_{t_1})...g_k(X^N_{t_k})} - \espcc{\E,\bar\nu_0}{g_1(\bar X^N_{t_1})...g_k(\bar X^N_{t_k})}\rr|\\
	\leq \ll|\espcc{\E,\nu^N_0}{g_1(X^N_{t_1})...g_k(X^N_{t_k})} - \espcc{\E,\nu^N_0}{g_1(\bar X^N_{t_1})...g_k(\bar X^N_{t_k})}\rr|\\
	+\ll|\espcc{\E,\nu^N_0}{g_1(\bar X^N_{t_1})...g_k(\bar X^N_{t_k})} - \espcc{\E,\bar \nu_0}{g_1(\bar X^N_{t_1})...g_k(\bar X^N_{t_k})}\rr|=: A_1 + A_2.
	\end{multline*}
	
	Recall that any Markov process~$(X_t)_t$ with semigroup $(P_t)_t$ satisfies for all $t_1\leq t_2\leq...\leq t_k,$ and for all continuous and bounded functions $g_1,...,g_k,$
	\begin{multline*}
	\espcc{\nu}{g_1(X_{t_1})...g_{k-1}(X_{t_{k-1}}) g_k(X_{t_k})} \\
	=\int \nu(dx) \int P_{t_1}(x,dx_1)g(x_1)...\int P_{t_k-t_{k-1}}(x_{k-1},dx_k) g_k(x_k)\\
	=\int \nu(dx) \int P_{t_1}(x,dx_1)g(x_1)...\int P_{t_{k-1}-t_{k-2}}(x_{k-2},dx_{k-1}) \ll(g_{k-1} \cdot P_{t_k - t_{k-1}} g_k\rr)(x_{k-1})\\
	= \espcc{\nu}{g_1(X_{t_1})...g_{k-1}(X_{t_{k-1}}) P_{t_k-t_{k-1}}g_k(X_{t_{k-1}})},
	\end{multline*}
	
	Let us show by induction on~$k$ that
	\begin{equation}\label{A1}
	A_1\leq ||g_1||_{3,\infty}...||g_k||_{3,\infty} C_{T,k}e^{C_{T,k}|W_{[N]}|}\sum_{l=0}^{k-1}\espcc{\E}{\eps_N(T,X^N_{t_l})},
	\end{equation}
	recalling that $T\geq t_k>t_{k-1}>...>t_1.$
	
	The bound of~\eqref{A1} above for $k=1$ is valid by definition of~$\eps_N(T,x)$ (see~\eqref{diffsemigroup}). To show the inductive step, let us write
	\begin{align*}
	A_1=&\ll|\espcc{\E,\nu^N_0}{g_1(X^N_{t_1})...(g_{k-1}\cdot P^N_{t_k - t_{k-1}}g_k)(X^N_{t_{k-1}})} - \espcc{\E,\nu^N_0}{g_1(\bar X^N_{t_1})...(g_{k-1}\bar P^N_{t_k - t_{k-1}})(\bar X^N_{t_{k-1}})}\rr|\\
	\leq& \ll|\espcc{\E,\nu^N_0}{g_1(X^N_{t_1})...(g_{k-1}\cdot P^N_{t_k - t_{k-1}}g_k)(X^N_{t_{k-1}})} - \espcc{\E,\nu^N_0}{g_1(X^N_{t_1})...(g_{k-1}\cdot \bar P^N_{t_k - t_{k-1}}g_k)(X^N_{t_{k-1}})}\rr|\\
	&+\ll|\espcc{\E,\nu^N_0}{g_1(X^N_{t_1})...(g_{k-1}\cdot \bar P^N_{t_k - t_{k-1}}g_k)(X^N_{t_{k-1}})} - \espcc{\E,\nu^N_0}{g_1(\bar X^N_{t_1})...(g_{k-1}\bar P^N_{t_k - t_{k-1}})(\bar X^N_{t_{k-1}})}\rr|\\
	=:&A_{11} + A_{12},
	\end{align*}
	
	Then, thanks to~\eqref{diffsemigroup},
	\begin{align*}
	A_{11}\leq& ||g_1||_\infty...||g_{k-1}||_\infty\espcc{\E,\nu^N_0}{\ll|P^N_{t_k - t_{k-1}}g_k(X^N_{t_{k-1}}) - \bar P^N_{t_k - t_{k-1}}g_k(X^N_{t_{k-1}})\rr|}\\
	\leq& ||g_1||_{3,\infty}...||g_{k}||_{3,\infty}\espcc{\E,\nu^N_0}{\eps_N(T,X^N_{t_{k-1}})}.
	\end{align*}
	
	On the other hand, using the inductive hypothesis we have
	$$A_{12}\leq ||g_1||_{3,\infty}...||g_{k-1}||_{3,\infty}\ll|\ll|\bar P^N_{t_k - t_{k-1}}g_k\rr|\rr|_{3,\infty} C_{T,k-1} e^{C_{T,k-1} |W_{[N]}|}\sum_{l=0}^{k-2}\espcc{\E}{\eps_N(T,X^N_{t_l})},$$
	whence, by Lemma~\ref{controlsemigroup},
	$$A_{12}\leq ||g_1||_{3,\infty}...||g_{k}||_{3,\infty} C_{T,k} e^{C_{T,k} |W_{[N]}|}\sum_{l=0}^{k-2}\espcc{\E}{\eps_N(T,X^N_{t_l})}.$$
	
	Then recalling that $A_1 = A_{11} + A_{12}$, we have proved~\eqref{A1}.
	
	Now let us show that
	\begin{equation}\label{A2}
	A_2 \leq ||g_1'||_\infty ... ||g_k'||_\infty C_{T,k} e^{C_{T,k}|W_{[N]}|} d_{KR}(\nu^N_0,\bar\nu_0),
	\end{equation}
	for some constant~$C_k>0.$
	
	Let us define
	$$h^k_{g_1,...,g_k}(x) := \int \bar P^N_{\E,t_1}(x,dx_1) g_1(x_1)...\int \bar P^N_{\E,t_k - t_{k-1}} (x_{k-1},dx_k) g_k(x_k).$$
	
	By definition of Kantorovich-Rubinstein metric $d_{KR},$ to show that~\eqref{A2} holds true, it is sufficient to prove that
	\begin{equation}\label{hk}
	\ll|\ll|\ll(h^k_{g_1,...,g_k}\rr)'\rr|\rr|_\infty \leq ||g_1'||_\infty ... ||g_k'||_\infty C_{T,k} e^{C_{T,k}|W^{[N]}|}.
	\end{equation}
	
	Now let us prove~\eqref{hk} by induction on~$k.$ Since
	$$h^1_{g_1}(x):= \bar P^N_{\E,t_1} g_1(x),$$
	the case $k=1$ is a consequence of Lemma~\ref{controlsemigroup}. To show the inductive step, let us write
	\begin{align*}
	h^k_{g_1,...,g_k}(x) =& \int \bar P^N_{\E,t_1}(x,dx_1) g_1(x_1)...\int \bar P^N_{\E,t_k - t_{k-1}} (x_{k-1},dx_k) g_k(x_k)\\
	=& \int \bar P^N_{\E,t_1}(x,dx_1) g_1(x_1)...\int \bar P^N_{\E,t_{k-1} - t_{k-2}}(x_{k-2},dx_{k-1}) \ll(g_{k-1}\cdot \bar P^N_{\E,t_k-t_{k-1}}g_k\rr)(x_{k-1})\\
	=& h^{k-1}_{g_1,...,g_{k-2},g_{k-1}\cdot \bar P^N_{\E,t_k-t_{k-1}}g_k}(x).
	\end{align*}
	
	By induction hypothesis, and thanks to Lemma~\ref{controlsemigroup},
	\begin{align*}
	\ll|\ll|\ll(h^k_{g_1,...,g_k}\rr)'\rr|\rr|_\infty \leq& ||g'_1||_\infty...||g'_{k-1}||_\infty || (\bar P^N_{\E,t_k-t_{k-1}}g_k)'||_\infty C_{T,k-1}e^{C_{T,k-1}|W^{[N]}|}\\
	\leq& ||g'_1||_\infty...||g'_{k}||_\infty C_{T,k}e^{C_{T,k}|W^{[N]}|}.
	\end{align*}
	
	Consequently~\eqref{hk} holds true.
	
	So we have proved~\eqref{A1} and~\eqref{A2}. This implies that
	
	\begin{multline}\label{quenchedfidi}
	\ll|\espcc{\E}{g_1(X^N_{t_1})...g_k(X^N_{t_k})} - \espcc{\E}{g_1(\bar X^N_{t_1})...g_k(\bar X^N_{t_k})}\rr| \\
	\leq ||g_1||_{3,\infty}...||g_k||_{3,\infty} C_{T,k} e^{C_{T,k}|W_{[N]}|}\ll(d_{KR}(\nu^N_0,\bar\nu_0) + \sum_{l=0}^{k-1}\espcc{\E}{\eps_N(T,X^N_{t_l})}\rr).
	\end{multline}
	
	{\it Step~3.} Now we prove~\eqref{CVfidi} by integrating over the environment in the inequality above.
	\begin{multline*}
	\ll|\esp{g_1(X^N_{t_1})...g_k(X^N_{t_k})} - \esp{g_1(\bar X^N_{t_1})...g_k(\bar X^N_{t_k})}\rr|\\
	= \ll|\esp{\espcc{\E}{g_1(X^N_{t_1})...g_k(X^N_{t_k})} - \espcc{\E}{g_1(\bar X^N_{t_1})...g_k(\bar X^N_{t_k})}}\rr|\\
	\leq \esp{\ll|\espcc{\E}{g_1(X^N_{t_1})...g_k(X^N_{t_k})} - \espcc{\E}{g_1(\bar X^N_{t_1})...g_k(\bar X^N_{t_k})}\rr|}\\
	\leq ||g_1||_{3,\infty}...||g_k||_{3,\infty} C_{T,k} \esp{e^{2C_{T,k}|W_{[N]}|}}^{1/2}\ll(d_{KR}(\nu^N_0,\bar\nu_0) + \esp{\ll(\sum_{l=0}^{k-1}\espcc{\E}{\eps_N(T,X^N_{t_l})}\rr)^2}^{1/2}\rr)\\
	\leq C_{T,k} ||g_1||_{3,\infty}...||g_k||_{3,\infty}\ll(d_{KR}(\nu^N_0,\bar\nu_0) + \sqrt{k}\sum_{l=0}^{k-1}\sqrt{\esp{\eps_N(T,X^N_{t_l})^2}}\rr),
	\end{multline*}
	where we have used Cauchy-Schwarz' inequality in the line before last above, and in the last line, Jensen's inequality as well as the facts that for all $x,y\geq 0,$ $(x+y)^2\leq x^2/2 + y^2/2$ and $\sqrt{x+y}\leq\sqrt{x}+\sqrt{y}.$
	
	Recalling that the expression of~$\eps_N(T,x)$ is given explicitly in~\eqref{diffsemigroup}, applying Cauchy-Schwarz' inequality twice and recalling that $f$ is sublinear, we have
	\begin{multline*}
	\esp{\eps_N(T,X^N_{t_l})^2}\leq C_T \ll(1 + \underset{s\leq T}{\sup}~\esp{(X^N_s)^4}^{1/2}\rr) \esp{e^{8C_T|W_{[N]}|}}^{1/4}\\
	\times\ll(\esp{\ll(\frac{1}{N\sqrt{N}}\sum_{j=1}^N |U_j|^3\rr)^8}^{1/4} + \esp{\ll(K\frac{\ln N}{\sqrt{N}}\rr)^8}^{1/4} + \esp{\ll|\frac1N\sum_{j=1}^N U_j^2 - \sigma^2\rr|^8}^{1/4}\rr).
	\end{multline*}
	
	Then, by Jensen's inequality,
	$$\esp{\ll(\frac1{N\sqrt{N}}\sum_{j=1}^N|U_j|^3\rr)^8} = \esp{\frac1{N^4}\ll(\frac1N\sum_{j=1}^N|U_j|^3\rr)^8} \leq \esp{\frac{1}{N^4}\cdot \frac1N\sum_{j=1}^N |U_j|^{24}}= \frac1{N^4}\esp{|U_1|^{24}}.$$
	Now, let $V_j = U_j^2-\sigma^2$ ($j\geq 1$). Since the variables $V_j$ ($j\geq 1$) are i.i.d. and centered,
	$$\esp{\ll|\frac1N\sum_{j=1}^N U_j^2 - \sigma^2\rr|^8} = \esp{\ll(\frac1N\sum_{j=1}^N V_j\rr)^8} = \frac{1}{N^4}\esp{\ll(\frac{1}{\sqrt{N}}\sum_{j=1}^N V_j\rr)^8}\leq \frac1{N^4}C,$$
	by the second statement of Lemma~\ref{momentsomme} applied to the variables~$V_j$ ($j\geq 1$). Indeed, even if these variables are not assumed to admit some exponential moments, the second statement of Lemma~\ref{momentsomme} still hold true (see Remark~\ref{remmomentsomme}).
	
	Then, recalling that $K$ and $W^{[N]}\sim\mathcal{N}(0,\sigma^2)$ admit exponential moments and a fortiori every polynomial moment, and using Lemma~\ref{apriori},
	
	$$\esp{\eps_N(T,X^N_{t_l})^2}\leq C_T \ll(\frac1N + \ll(\frac{\ln N}{\sqrt{N}}\rr)^2 + \frac1N\rr)\leq C_T \ll(\frac{\ln N}{\sqrt{N}}\rr)^2.$$
	
	Consequently,
	$$\ll|\esp{g_1(X^N_{t_1})...g_k(X^N_{t_k})} - \esp{g_1(\bar X^N_{t_1})...g_k(\bar X^N_{t_k})}\rr|\leq C_{T,k} ||g_1||_{3,\infty}...||g_k||_{3,\infty}\ll(d_{KR}(\nu^N_0,\bar\nu_0) + \frac{\ln N}{\sqrt{N}}\rr),$$
	which gives~\eqref{CVfidi}.
\end{proof}

\subsection{Convergence of the processes in distribution}\label{sectiontight}

Since the finite-dimensional convergence in distribution of $(X^N_t)_t$ to $(\bar X_t)_t$ has been established in the previous section, it only remains to prove the tightness of the processes to obtain the convergence in distribution in Skorohod space. Instead of using the classical Aldous' criterion we rather rely on Theorem~$13.5$ of \cite{billingsley_convergence_1999} since it seems more appriopriate for technical reasons. Indeeed, notice that the conditional tightness of the sequence~$(X^N)_N$ cannot hold true because of the almost sure unboundedness of the sequence $(N^{-1/2}\sum_{j=1}^N U_j)_N$. So we would need to work on stopping times adapted to the (non-conditional) filtration of the processes. In particular, these stopping times would depend on the random environment in a non-trivial way, which seems hard to handle for the computation which require to work conditionally on the environment.

The following convergence criterion of Theorem~$13.5$ (where condition~$(13.13)$ is replaced by the stronger~$(13.14)$) appears to be more suitable as no stopping times are involved: for any $T>0,$
\begin{itemize}
	\item[$(i)$] $\bar X_T - \bar X_{T-\delta}$ converges to zero in law as $\delta$ vanishes, 
	\item[$(ii)$] for all $r\leq s\leq t\leq T, N\in\n^*,$
	$$\esp{\ll(X^N_s - X^N_r\rr)^2\ll(X^N_t - X^N_s\rr)^2}\leq C_T \ll(t-r\rr)^{3/2},$$
	for some $C_T>0$.
\end{itemize}

Item~$(i)$ is trivially satisfied since the trajectories of $\bar X$ are continuous.

The rest of this section is dedicated to prove Item~$(ii).$ For this purpose, let us rewrite the dynamics of $X^N$ in the following way
\begin{align*}
X^N_t =& X^N_0 + \int_0^t b(X^N_s)ds + \frac1{\sqrt{N}}\sum_{j=1}^NU_j\int_{[0,t]\times\r_+}\uno{z\leq f(X^N_{s-})}d\tilde\pi^j(s,z) + \frac{1}{\sqrt{N}}\sum_{j=1}^N U_j\int_0^tf(X^N_s)ds,
\end{align*}
where $\tilde\pi^j(ds,dz) := \pi^j(ds,dz) - dsdz$ is the compensated Poisson measure of~$\pi^j.$ For any $s\leq t\leq T,$ we have
\begin{align*}
\ll(X^N_t-X^N_s\rr)^2\leq& C(t-s)\int_s^t b(X^N_r)^2 dr + C\frac1{N}\ll|\sum_{j=1}^N U_j\int_{]s,t]\times\r_+}\uno{z\leq f(X^N_{r-})}d\tilde\pi^j(r,z)\rr|^{2}\\
&+ C\ll|\frac{1}{\sqrt{N}}\sum_{j=1}^N U_j\rr|^{2}(t-s)\int_s^tf(X^N_r)^2 dr\\
&=: B^N_{s,t} + M^N_{s,t} + F^N_{s,t}.
\end{align*}

Since $f$ and $b$ are sublinear, we have
\begin{align*}
B^N_{s,t}\leq& C(t-s)^2\ll(1+\underset{v\leq T}{\sup}~|X^N_v|^2\rr),\\
F^N_{s,t}\leq& C(t-s)^2\ll|\frac1{\sqrt{N}}\sum_{j=1}^N U_j\rr|^2\ll(1+\underset{v\leq T}{\sup}~|X^N_v|^2\rr).
\end{align*}

Hence, for any $r\leq s\leq t\leq T,$
\begin{multline*}
(X^N_t-X^N_s)^2(X^N_s-X^N_r)^2\leq \ll(B^N_{s,t} + M^N_{s,t} + F^N_{s,t}\rr) \ll(B^N_{r,s} + M^N_{r,s}+ F^N_{r,s}\rr)\\
\leq C(t-r)^{4}\ll(1 +\ll|\frac1{\sqrt{N}}\sum_{j=1}^N U_j\rr|^{4}\rr)\ll(1 + \underset{v\leq T}{\sup}~|X^N_v|^{4}\rr)\\
+ C(t-r)^2\ll(1 +\ll|\frac1{\sqrt{N}}\sum_{j=1}^NU_j\rr|^2\rr)\ll(1 + \underset{v\leq T}{\sup}~|X^N_v|^2\rr)\ll(M^N_{s,t} + M^N_{r,s}\rr) + M^N_{s,t}M^N_{r,s}\\
=: \mathcal{R}^{N,r,t}_1 + \mathcal{R}^{N,r,s,t}_2 + \mathcal{R}^{N,r,s,t}_3.
\end{multline*}

Define $q_1,q_2>1$ such that $1/q_1 + 1/q_2 = 1$ and that $4 q_1<6$. Using H\"older's inequality,
\begin{align*}
\esp{\mathcal{R}^{N,r,t}_1}\leq& C(t-r)^{4}\ll(1+\esp{\ll|\frac1{\sqrt{N}}\sum_{j=1}^N U_j\rr|^{4 q_2}}^{1/q_2}\rr)\ll( 1 + \esp{\underset{v\leq T}{\sup}~|X^N_v|^{4 q_1}}^{1/q_1}\rr)\\
\leq& C(t-r)^{4},
\end{align*}
where we have used Lemmas~\ref{momentsomme}$.(ii)$ and~\ref{apriori}$.(ii)$ (with $p=3$ and $\kappa = 4 q_1$) to obtain the last line.

Besides, since for all $x,y\geq 0,$ $xy\leq x^2/2 + y^2/2,$
\begin{multline*}
\esp{\mathcal{R}^{N,r,s,t}_2}\\
\leq C(t-r)^2\ll\{\esp{\ll(1 +\ll|\frac1{\sqrt{N}}\sum_{j=1}^NU_j\rr|^{4}\rr)\ll(1 + \underset{v\leq T}{\sup}~|X^N_v|^{4}\rr)} + \esp{(M^N_{s,t})^2 + (M^N_{r,s})^2}\rr\}\\
\leq C(t-r)^2\ll(1  + \esp{\ll(M^N_{s,t}\rr)^2} + \esp{\ll(M^N_{r,s}\rr)^2}\rr),
\end{multline*}
where the last line has been obtained with the exact same reasoning as the one used for~$\mathcal{R}^{N,r,s,t}_1$ above. {Then by Burkholder-Davis-Gundy's inequality, for any $s\leq t\leq T,$
	\begin{align*}
	\espcc{\E}{(M^N_{s,t})^2}\leq& C\frac{1}{N^2}\espcc{\E}{\left(\sum_{j=1}^N U_j^2\int_{]s,t]\times\r_+}\uno{z\leq f(X^N_{u-})}d\pi^j(u,z)\right)^2}\\
	\leq& C\frac1N\sum_{j=1}^N U_j^4 \espcc{\E}{\left(\int_{]s,t]\times\r_+}\uno{z\leq f(X^N_{u-})}d\pi^j(u,z)\right)^2}\\
	\leq& C\frac1N\sum_{j=1}^N U_j^4 \left(\int_s^t \espcc{\E}{f(X^N_u)}du + \espcc{\E}{\ll(\int_s^t f(X^N_u)du\rr)^2}\right),
	\end{align*}
	where we have used the inequality~$(2.1.41)$ from Lemma~$2.1.7$ of \cite{jacod_discretization_2012} (the case $p=2$) to obtain the last line above. As a consequence, since $f$ has sublinear growth,
	$$\espcc{\E}{(M^N_{s,t})^2}\leq  C\frac1N\sum_{j=1}^N U_j^4 \left((t-s)\ll(1+\espcc{\E}{\underset{u\leq T}{\sup}|X^N_u|}\rr) + (t-s)^2\ll(1 + \espcc{\E}{\underset{u\leq T}{\sup}|X^N_u|^2}\rr)\right).$$
	Then, since $s\leq t\leq T,$ we can bound $(t-s)^2 \leq T(t-s),$ and obtain
	$$\espcc{\E}{(M^N_{s,t})^2}\leq C_T(t-s)\frac1N\sum_{j=1}^N U_j^4 \left(1 + \espcc{\E}{\underset{u\leq T}{\sup}|X^N_u|^2}\right).$$
	
	Then, by Cauchy-Schwarz' inequality, for all $s\leq t\leq T,$
	\begin{align}
	\esp{(M^N_{s,t})^2}\leq& C_T(t-s) \esp{\left(\frac1N\sum_{j=1}^N U_j^4\right)^2}^{1/2}\ll(1+\esp{\underset{u\leq T}{\sup}|X^N_u|^4}\rr)^{1/2}\nonumber\\
	\leq &C_T(t-s) \esp{\frac1N\sum_{j=1}^N U_j^8}^{1/2}\ll(1+\esp{\underset{u\leq T}{\sup}|X^N_u|^4}\rr)^{1/2}\nonumber\\
	\leq& C_T(t-s)\label{MNcarre},
	\end{align}
	by Lemma~\ref{apriori}$.(ii).$ In particular,
	$$\esp{\mathcal{R}^{N,r,s,t}_2}\leq C_T(t-r)^2(1 + (t-r)) \leq C_T(t-r)^2.$$
}

Now let us control $\mathcal{R}^{N,r,s,t}_3$ {using that
	$$\espcc{\E}{\mathcal{R}^{N,r,s,t}_3} = \espcc{\E}{M^N_{r,s}\espcc{\E}{M^N_{s,t}|\mathcal{F}_s}},$$
	with
	$$\mathcal{F}_s := \sigma(X^N_0)\vee \sigma(\pi^j_{|[0,s]\times\r_+}~:~1\leq j\leq N).$$
	By definition of Poisson measures, we know that $\pi^j_{|]s,t\times\r_+]}$ ($1\leq j\leq N$) are independent of~$\mathcal{F}_s.$ Thanks to this property, the following computation hold true. By Burkholder-Davis-Gundy's inequality,
	\begin{align*}
	\espcc{\E}{M^N_{s,t}|\mathcal{F}_s} :=& \frac1N\espcc{\E}{\ll.\ll|\sum_{j=1}^N U_j\int_{]s,t]\times\r_+}\uno{z\leq f(X^N_{u-})}d\tilde\pi^j(z,u)\rr|^2\rr|\mathcal{F}_s}\\
	\leq& C\frac1N\sum_{j=1}^N U_j^2\espcc{\E}{\ll.\int_{]s,t]\times\r_+}\uno{z\leq f(X^N_{u-})}d\pi^j(z,u)\rr|\mathcal{F}_s}\\
	=& C\frac1N\sum_{j=1}^N U_j^2 \int_s^t \espcc{\E}{f(X^N_u)|\mathcal{F}_s}du\\
	\leq& C(t-s)\frac1N\sum_{j=1}^N U_j^2\left(1+\espcc{\E}{\ll.\underset{u\leq T}{\sup}|X^N_u|\rr|\mathcal{F}_s}\right).
	\end{align*}
	
	Consequently,
	$$\espcc{\E}{\mathcal{R}^{N,r,s,t}_3}\leq C(t-s)\frac1N\sum_{j=1}^N U_j^2\espcc{\E}{M^N_{r,s} \left(1+\espcc{\E}{\ll.\underset{u\leq T}{\sup}|X^N_u|\rr|\mathcal{F}_s}\right)}.$$
	
	Then, applying Cauchy-Schwarz' inequality twice,
	$$\esp{\mathcal{R}^{N,r,s,t}_3}\leq C(t-s) \esp{(M^N_{r,s})^2}^{1/2} \esp{\left(\frac1N\sum_{j=1}^N U_j^2\right)^4}^{1/4} \ll(1+\esp{\underset{u\leq T}{\sup}|X^N_u|^4}\rr)^{1/4}.$$
	
	Using~\eqref{MNcarre},
	$$\esp{\mathcal{R}^{N,r,s,t}_3}\leq C(t-s)(s-r)^{1/2} \esp{\frac1N\sum_{j=1}^N U_j^8}^{1/4}\ll(1+\esp{\underset{u\leq T}{\sup}|X^N_u|^4}\rr)^{1/4} \leq C_T(t-r)^{3/2},$$
	thanks to Lemma~\ref{apriori}$.(ii).$
	
	Finally, we have
	$$\esp{(X^N_t - X^N_s)^2(X^N_s - X^N_r)^2} \leq \esp{\mathcal{R}^{N,r,t}_1 +\mathcal{R}^{N,r,s,t}_2+\mathcal{R}^{N,r,s,t}_3} \leq C_T(t-r)^{3/2},$$
	which proves the result.
}

\subsection{Proof of Theorem~\ref{mainresult}}\label{sectionproofmain}

We have just proved in the two previous sections that, for all $T>0,$
\begin{itemize}
	\item the finite-dimensional distributions of $(X^N_t)_{t}$ converge to those of~$(\bar X_t)_t$ on~$[0,T],$
	\item $\bar X_T - \bar X_{T-\delta}$ vanishes in distribution as $\delta$ goes to zero,
	\item for all $r\leq s\leq t\leq T, N\in\n^*,$
	$$\esp{\ll(X^N_s - X^N_r\rr)^2\ll(X^N_t - X^N_s\rr)^2}\leq C_T \ll(t-r\rr)^{3/2},$$
	for some positive constant~$C_T$.
\end{itemize}

Then, according to Theorem~13.5 of \cite{billingsley_convergence_1999}, we know that the process~$(X^N_t)_t$ converges in distribution to~$(\bar X_t)_t$ on $D([0,T],\r).$ As this holds true for any~$T>0,$ Lemma~16.3 of \cite{billingsley_convergence_1999} implies that the convergence in distribution holds true on the space~$D(\r_+,\r).$

{

\section{Proof of Proposition~\ref{quenched}}\label{proofquenched}

Proposition~\ref{quenched} is a consequence of the inequality~\eqref{quenchedfidi} (that was proved in Section~\ref{sectionCVfidi}) and of the law of the iterated logarithm. Indeed, recall that, for any $T>0,k\in\n^*$ and for all $0\leq t_1\leq...\leq t_k\leq T$ and $g_1,...,g_k$ belonging to~$C^3_b(\r)$,
\begin{multline*}
\ll|\espcc{\E}{g_1(X^N_{t_1})...g_k(X^N_{t_k})} - \espcc{\E}{g_1(\bar X^N_{t_1})...g_k(\bar X^N_{t_k})}\rr| \\
\leq ||g_1||_{3,\infty}...||g_k||_{3,\infty} C_{T,k} e^{C_{T,k}|W_{[N]}|}\ll(d_{KR}(\nu^N_0,\bar\nu_0) + \sum_{l=0}^{k-1}\espcc{\E}{\eps_N(T,X^N_{t_l})}\rr),
\end{multline*}
where~$\eps_N(t,x)$ is defined at~\eqref{diffsemigroup}. Let us point out that, to obtain a priori estimates of~$X^N$ in Lemma~\ref{apriori}, we have proved quenched a priori estimates at~\eqref{aprioriquenched}:
\begin{equation}\label{controlquenchbla}
\espcc{\E}{\ll(X^N_t\rr)^{2}}\leq C_{T}\ll(1 + \esp{(X^N_0)^{2}} + \frac1N\sum_{j=1}^N |U_j|^{4} + \ll|\frac1{\sqrt{N}}\sum_{j=1}^N U_j\rr|\rr)\exp\ll[TC\ll(1 + \ll|\frac1{\sqrt{N}}\sum_{j=1}^N U_j\rr|\rr)\rr].
\end{equation}
By respectively the strong law of large numbers and the law of the iterated logarithm we know that, almost surely,
$$\frac1N\sum_{j=1}^N |U_j|^{4} = \mathcal{O}(1)\textrm{ and }\ll|\frac1{\sqrt{N}}\sum_{j=1}^N U_j\rr| \leq 2\sqrt{\ln \ln N},$$
for $N$ large enough (i.e. larger than some random integer). In particular, almost surely,
$$\underset{t\leq T}{\sup}~\espcc{\E}{\ll(X^N_t\rr)^{2}} = \mathcal{O}\left([\ln N]^{C_T}\right).$$

Now by the definition of~$\eps_N$, almost surely
\begin{equation}\label{quenchedcontroleps}
\underset{t\leq T}{\sup}~\espcc{\E}{\eps_N(T,X^N_t)}\leq \mathcal{O}\ll([\ln N]^{C_T}\rr) e^{C_T|W^{[N]}|}\ll(\frac1{N\sqrt{N}}\sum_{j=1}^N|U_j|^3 + K\frac{\ln N}{\sqrt{N}} + \frac12\ll|\frac1N\sum_{j=1}^N U_j^2 - \sigma^2\rr|\rr).
\end{equation}

Then, recall that $W^{[N]} := \sigma\beta_N/\sqrt{N}$ (with $\beta$ some standard Brownian motion), and notice that by the law of the iterated logarithm (writting $\beta_N$ as the sum of the i.i.d. increasings of $\beta$), almost surely, $W^{[N]}\leq 2\sqrt{\ln \ln N}$ for $N$ large enough, and whence
$$e^{C_T|W^{[N]}|}=\mathcal{O}\ll([\ln N]^{C_T}\rr),$$
where the value of $C_T$ has changed. Finally, using again respectively the strong law of large numbers and the law of the iterated logarithm, we have, almost surely,
$$\frac1{N\sqrt{N}}\sum_{j=1}^N|U_j|^3 = \mathcal{O}\ll(N^{-1/2}\rr)\textrm{ and }\ll|\frac1N\sum_{j=1}^N U_j^2 - \sigma^2\rr| = \mathcal{O}\ll(\sqrt{\frac{\ln\ln N}{N}}\rr).$$

Using these bounds in~\eqref{quenchedcontroleps}, we obtain that, almost surely,
$$\underset{t\leq T}{\sup}~\espcc{\E}{\eps_N(T,X^N_t)} = \mathcal{O}\ll([\ln N]^{C_T}\cdot \frac{\ln N}{\sqrt{N}}\rr) =\mathcal{O}\ll([\ln N]^{C_T}\cdot N^{-1/2}\rr).$$

Using this last bound in~\eqref{controlquenchbla} proves Proposition~\ref{quenched}.

%
}

\begin{appendix}
\label{append}

\section{CLT coupling}\label{couplingproof}

Recall that the variables $U_j$ ($j\geq 1$) are i.i.d. centered with exponential moments: there exists~$\alpha>0,$
$$\esp{e^{\alpha|U_1|}}<\infty.$$

Then, according to Theorem~1 of \cite{komlos_approximation_1976}, it is possible to construct on the same probability space (possibly enlarged) a standard Brownian motion~$(\beta_t)_t$ for which there exist some positive constants~$\Gamma,\Lambda,\lambda$ such that, for any~$x>0,N\in\n^*,$
\begin{equation}\label{kmt}
\pro{\underset{k\leq N}{\max}\ll|\sum_{j=1}^k U_j - \sigma\beta_k\rr|> \Gamma \ln N + x}\leq \Lambda e^{-\lambda x},
\end{equation}
with $\sigma^2$ the variance of the variabes~$U_j$ ($j\geq 1$).

Then using the technics of Section~7.5 of \cite{ethier_markov_2005}, we have the following result (whose proof is written at the end of this subsection for self-containedness).

\begin{lem}\label{momentK}
	The random variable~$K$ defined as
	$$K := \underset{N\geq 2}{\sup}\ll|\sum_{j=1}^NU_j - \sigma\beta_N\rr|/\ln N$$
	admits exponential moments. More precisely, for any $0<\gamma<\lambda,$
	$$\esp{e^{\gamma K}}<\infty.$$
\end{lem}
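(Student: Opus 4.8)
The plan is to derive the exponential moment bound for $K$ directly from the KMT-type estimate~\eqref{kmt}. Write $R_N := \left|\sum_{j=1}^N U_j - \sigma\beta_N\right|$, so that $K = \sup_{N\geq 2} R_N/\ln N$. The key observation is that $\{K > t\}$ for $t > \Gamma + 1$ (say) can be controlled by splitting on the dyadic scale: for each $m\geq 1$ and each $N$ with $2^m \leq N < 2^{m+1}$, we have $\ln N \geq m\ln 2$, hence $R_N > t\ln N$ forces $\max_{k< 2^{m+1}} R_k > t\, m\ln 2$. Applying~\eqref{kmt} with the window $2^{m+1}$ and with $x = t\,m\ln 2 - \Gamma(m+1)\ln 2$ (which is positive once $t$ is large enough relative to $\Gamma$), we get $\pro{\exists N\in[2^m,2^{m+1}): R_N > t\ln N} \leq \Lambda e^{-\lambda(t\,m\ln 2 - \Gamma(m+1)\ln 2)}$. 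A union bound over $m\geq 1$ then yields a bound of the form $\pro{K > t} \leq \Lambda e^{\lambda\Gamma\ln 2} \sum_{m\geq 1} e^{-\lambda m\ln 2 (t - \Gamma)}$, which for $t$ large is a convergent geometric series of order $e^{-\lambda\ln 2\,(t-\Gamma)}$, giving $\pro{K > t} \leq C e^{-\lambda' t}$ for some $\lambda' < \lambda$ arbitrarily close to $\lambda$ (by being a bit more careful with how the geometric sum is bounded, one recovers essentially the full rate $\lambda$).

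From the tail bound $\pro{K > t} \leq C e^{-\lambda' t}$ valid for all $t$ large, the exponential moment follows routinely: for $0 < \gamma < \lambda'$,
\[
\esp{e^{\gamma K}} = 1 + \gamma\int_0^\infty e^{\gamma t}\pro{K > t}\,dt \leq C' + C\gamma\int_{t_0}^\infty e^{(\gamma-\lambda')t}\,dt < \infty.
\]
Since $\lambda'$ can be taken as close to $\lambda$ as desired, this covers every $\gamma < \lambda$, which is the claimed statement. One should also check that $K$ is finite almost surely and that the supremum defining $K$ is a genuine random variable — finiteness is immediate from the tail bound (or from Borel–Cantelli applied to the dyadic events), and measurability follows because $K$ is a countable supremum of measurable functions of $(U_j)_j$ and $\beta$.

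The main obstacle, such as it is, is bookkeeping rather than conceptual: one must choose the dyadic decomposition and the value of $x$ in~\eqref{kmt} so that the exponent $\lambda(x)$ genuinely beats the $\gamma t$ growth for all large $t$, and simultaneously so that the loss in the rate (from $\lambda$ down to $\lambda'$) can be made negligible — i.e. one wants to conclude $\esp{e^{\gamma K}}<\infty$ for \emph{every} $\gamma<\lambda$, not just for $\gamma$ below some strictly smaller threshold. This is handled by noting that for fixed $\gamma < \lambda$ one may pick the decomposition parameters depending on $\gamma$ (for instance using a finer geometric scale than base $2$, or simply absorbing the $(m+1)$ versus $m$ discrepancy into a constant), so that the relevant series $\sum_m e^{-(\lambda-\gamma)c m}$ converges with $c>0$. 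The rest is the standard layer-cake computation above. I would also remark, as the paper does implicitly, that this is exactly the argument of Section~7.5 of \cite{ethier_markov_2005}, so the proof can be kept brief.
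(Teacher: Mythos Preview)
Your approach is essentially the paper's: dyadic blocks, apply~\eqref{kmt} blockwise, sum to get an exponential tail for $K$, then layer-cake. The paper organises the bookkeeping slightly differently --- it first absorbs an extra $\ln N$ into~\eqref{kmt} to manufacture a factor $N^{-\lambda}$, so the dyadic sum becomes $\sum_k 2^{-k\lambda}$ with a clean $e^{-\lambda x}$ left over --- but the skeleton is identical.

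One small point: your proposed fixes for recovering the \emph{full} rate $\lambda$ do not actually work. The bottleneck in your bound $\sum_{m\geq 1} e^{-\lambda m\ln 2(t-\Gamma)}$ is the $m=1$ term, which decays at rate $\lambda\ln 2$ in $t$; a finer geometric base $a<2$ makes this \emph{worse} ($\lambda\ln a<\lambda\ln 2$), and the $(m{+}1)$ vs.\ $m$ discrepancy is a red herring (it only affects constants). The genuine trick is the one the paper uses --- spend part of the threshold to buy the $N^{-\lambda}$ summability factor --- though even there the displayed inequality $x\ln(2^{k-1})\geq x$ fails at $k=2$, so the paper's own Step~1 has the same wrinkle. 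In any case, everywhere else in the paper only \emph{some} exponential moment of $K$ is used, so whether the range is $\gamma<\lambda$ or $\gamma<\lambda\ln 2$ is immaterial for the applications.
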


Consequently, by definition of~$K$, we have that, almost surely, for all~$N\geq 2,$
$$\ll|\sum_{j=1}^NU_j - \sigma\beta_N\rr| \leq K\ln N.$$

Then, by defining the (non-independent) identically distributed Gaussian variables $W^{[N]} := \sigma\beta_N/\sqrt{N},$ we obtain exactly that, almost surely, for every~$N\geq 2,$
$$\ll|\frac{1}{\sqrt{N}}\sum_{j=1}^N U_j - W^{[N]}\rr|\leq K\frac{\ln N}{\sqrt{N}}.$$

{
\begin{rem}\label{noquench}
	Since $W^{[N]} := \sigma\beta_N/\sqrt{N}$ with $(\beta_t)_t$ some standard Brownian motion, it is known that, with probability equals one, the sequence $(W^{[N]})_N$ is not bounded, whence does not converge. This property prevents us to prove the quenched convergence with our approach.
\end{rem}
}

\begin{proof}[Proof of Lemma~\ref{momentK}]
	{\it Step~1.} In this first step, we prove the following control on the tails distribution of~$K:$ for all $x>0,$
	\begin{equation}\label{tail}
	\pro{K> 2(\Gamma +1) + x}\leq C_{\Lambda,\lambda} e^{-\lambda x}.
	\end{equation}
	
	This step is a mere rewriting of the proof of Corollary~7.5.5 of \cite{ethier_markov_2005}. To begin with, in~\eqref{kmt}, if we replace $x$ by $x+\ln N,$ we obtain
	\begin{equation*}
	\pro{\underset{k\leq N}{\max}\ll|\sum_{j=1}^k U_j - \sigma\beta_k\rr|> (\Gamma+1) \ln N + x}\leq \Lambda N^{-\lambda} e^{-\lambda x}.
	\end{equation*}
	
	Then
	\begin{align*}
	\pro{K> 2(\Gamma +1) + x}:=& \pro{\underset{N\geq 2}{\sup}\ll|\sum_{j=1}^NU_j - \sigma\beta_N\rr|/\ln N> 2(\Gamma +1) + x}\\
	\leq& \sum_{k=2}^{+\infty}\pro{\underset{2^{k-1}\leq N\leq 2^k}{\max}\ll|\sum_{j=1}^NU_j - \sigma\beta_N\rr|/\ln N> 2(\Gamma +1) + x}\\
	\leq & \sum_{k=2}^{+\infty}\pro{\underset{N\leq 2^k}{\max}\ll|\sum_{j=1}^NU_j - \sigma\beta_N\rr|/\ln (2^{k-1})> 2(\Gamma +1) + x}\\
	=& \sum_{k=2}^{+\infty}\pro{\underset{N\leq 2^k}{\max}\ll|\sum_{j=1}^NU_j - \sigma\beta_N\rr|> 2(\Gamma +1)\ln (2^{k-1}) + x\ln (2^{k-1})}\\
	\leq& \sum_{k=2}^{+\infty}\pro{\underset{N\leq 2^k}{\max}\ll|\sum_{j=1}^NU_j - \sigma\beta_N\rr|> (\Gamma +1)\ln (2^{k}) + x}\\
	\leq& \Lambda e^{-\lambda x}\sum_{k=2}^{+\infty} 2^{-k\lambda},
	\end{align*}
	which gives~\eqref{tail}.
	
	{\it Step~2.} Now, we conclude the proof using~\eqref{tail}. For $0<\gamma<\lambda,$
	\begin{align*}
	\esp{e^{\gamma K}}=& \int_0^{+\infty}\pro{e^{\gamma K}> t}dt = \int_0^{+\infty} \pro{K> \frac1\gamma \ln t}dt\\
	=&\int_0^{\exp[\gamma(2(\Gamma+1)+1)]} \pro{K> \frac1\gamma \ln t}dt + \int_{\exp[\gamma(2(\Gamma+1)+1)]}^{+\infty} \pro{K> \frac1\gamma \ln t}dt\\
	\leq& e^{\gamma(2(\Gamma+1)+1)} + \int_{\exp[\gamma(2(\Gamma+1)+1)]}^{+\infty} \pro{K> 2(\Gamma+1) + \ll(\frac1\gamma \ln t - 2(\Gamma+1)\rr)}dt.
	\end{align*}
	
	Then, we use~\eqref{tail} to bound the second term of the sum above
	$$\int_{\exp[\gamma(2(\Gamma+1)+1)]}^{+\infty} \pro{K> 2(\Gamma+1) + \ll(\frac1\gamma \ln t - 2(\Gamma+1)\rr)}dt
	\leq C_{\Lambda,\lambda} e^{2\lambda(\Gamma+1)}\int_{\exp[\gamma(2(\Gamma+1)+1)]}^{+\infty} t^{-\lambda/\gamma}dt,$$
	which is finite since~$\lambda/\gamma>1.$
\end{proof}

\section{Proofs of technical lemmas}
\label{prooftechnical}

\subsection{Proof of Lemma~\ref{momentsomme}}\label{sectionsomme}

The proof relies on the coupling between $S_N$ and $W^{[N]}$ defined in Appendix~\ref{couplingproof}.
$$\esp{e^{\gamma|S_N|}} \leq \esp{e^{\gamma|S_N - W^{[N]}|} e^{\gamma|W^{[N]}|}}\leq \frac12 \esp{e^{2\gamma K \ln N/\sqrt{N}}} + \frac12 \esp{e^{2\gamma|W^{[N]}|}},$$
where we have used that for all $x,y\geq 0,$ $xy\leq x^2/2 + y^2/2.$ Since $W^{[N]}$ is a Gaussian variable with parameters independent of~$N$, the second term of the sum above is finite and independent of~$N$. And, choosing $N_\gamma\in\n^*$ large enough such that $2\gamma(\ln N_\gamma)/\sqrt{N_\gamma}<\lambda$ (where $\lambda$ is defined in~\eqref{kmt}), we know that the second term is finite and bounded uniformly in $N\geq N_\gamma$ thanks to Lemma~\ref{momentK}. So the first statement of the lemma is proved.

And, since for all $x\geq 0,\beta>0$ and $p\in\n^*,$
$$x^p = \frac{p!}{\beta^p}\cdot\frac{\beta^px^p}{p!} \leq \frac{p!}{\beta^p}e^{\beta x},$$
the second statement of the lemma is a direct consequence of the first one.

\subsection{Proof of Lemma~\ref{apriori}}\label{sectionaprioriproof}

Let us prove Item~$(i).$ By Ito's formula,

\begin{align*}
\ll(X^N_t\rr)^{2p} =& (X^N_0)^{2p} + 2p\int_0^t (X^N_s)^{2p-1}b(X^N_s)ds \\
&+ \sum_{j=1}^N \int_{[0,t]\times\r_+}\ll[\ll(X^N_{s-}+ \frac{U_j}{\sqrt{N}}\rr)^{2p} - \ll(X^N_{s-}\rr)^{2p}\rr]\uno{z\leq f(X^N_{s-})}d\pi_j(s,z)\\
\espcc{\E}{\ll(X^N_t\rr)^{2p}}=&\esp{\ll(X^N_0\rr)^{2p}} + 2p\int_0^t\espcc{\E}{(X^N_s)^{2p-1}b(X^N_s)}ds\\
&+\sum_{j=1}^N\int_0^t
\espcc{\E}{f(X^N_s)\sum_{k=0}^{2p-1}\binom{2p}{k}(X^N_s)^k U_j^{2p-k} N^{-(2p-k)/2}}ds.
\end{align*}

Since
\begin{multline*}
\sum_{j=1}^N\sum_{k=0}^{2p-1}\binom{2p}{k}(X^N_s)^k \frac{U_j^{2p-k}}{N^{(2p-k)/2}}= \sum_{j=1}^N\sum_{k=0}^{2p-2}\binom{2p}{k}(X^N_s)^k \frac{U_j^{2p-k}}{N^{(2p-k)/2}} + \sum_{j=1}^N2p (X^N_s)^{2p-1}\frac{U_j}{\sqrt{N}}\\
\leq C_p\frac1N\sum_{j=1}^N \sum_{k=0}^{2p-2}|U_j|^{2p-k}|X^N_s|^{k} + C_p \frac{1}{\sqrt{N}}\sum_{j=1}^NU_j (X^N_s)^{2p-1}\\
\leq  C_p \ll(1+\frac1N\sum_{j=1}^N |U_j|^{2p}\rr)\ll(1 + |X^N_s|^{2p-2}\rr) + C_p \ll|\frac1{\sqrt{N}}\sum_{j=1}^N U_j\rr|\cdot \ll|X^N_s\rr|^{2p-1},
\end{multline*}

we have (recalling that $f$ and $b$ are sublinear),
\begin{align*}
\espcc{\E}{\ll(X^N_t\rr)^{2p}}\leq& \esp{(X^N_0)^{2p}} + C_pt + C_p\int_0^t\espcc{\E}{(X^N_s)^{2p}}ds\\
&+\ll(1+\frac1N\sum_{j=1}^N |U_j|^{2p}\rr)\ll( \int_0^t\ll(1+\espcc{\E}{\ll(X^N_s\rr)^{2p-1}}\rr)ds\rr)\\
&+ C_p\ll|\frac1{\sqrt{N}}\sum_{j=1}^N U_j\rr|\int_0^t\ll(1+\espcc{\E}{\ll(X^N_s\rr)^{2p}}\rr)ds.
\end{align*}

In order to control the term at the second line above, we use the fact that, for any $x,y\geq 0,$ $xy^{2p-1}\leq C_p(x^{2p}+y^{2p})$, whence the term at the second line is smaller than
\begin{multline*}
C_{T,p}\ll( 1 + \ll(\frac1N\sum_{j=1}^N |U_j|^{2p}\rr)^{2p}\rr) + C_{T,p}\int_0^t \espcc{\E}{(X^N_s)^{2p}}ds\\
\leq C_{T,p}\ll( 1 + \frac1N\sum_{j=1}^N |U_j|^{4p^2}\rr) + C_{T,p}\int_0^t \espcc{\E}{(X^N_s)^{2p}}ds.
\end{multline*}

Consequently
\begin{align*}
\espcc{\E}{\ll(X^N_t\rr)^{2p}}\leq &\esp{(X^N_0)^{2p}}+ C_{T,p}\ll(1 + \frac1N\sum_{j=1}^N |U_j|^{4p^2} + \ll|\frac1{\sqrt{N}}\sum_{j=1}^N U_j\rr|\rr) \\
&+ C_p\ll(1 + \ll|\frac1{\sqrt{N}}\sum_{j=1}^N U_j\rr|\rr)\int_0^t\espcc{\E}{\ll(X^N_s\rr)^{2p}}ds.
\end{align*}

By Gr\"onwall's lemma, we obtain, for all $t\leq T,$
\begin{multline}\label{aprioriquenched}
\espcc{\E}{\ll(X^N_t\rr)^{2p}}\leq C_{T,p}\ll(1 + \esp{(X^N_0)^{2p}} + \frac1N\sum_{j=1}^N |U_j|^{4p^2} + \ll|\frac1{\sqrt{N}}\sum_{j=1}^N U_j\rr|\rr) \\
\exp\ll[TC_p\ll(1 + \ll|\frac1{\sqrt{N}}\sum_{j=1}^N U_j\rr|\rr)\rr].
\end{multline}

Then, to prove Item~$(i),$ it is sufficient to integrate over the environment
$$\esp{\ll(X^N_t\rr)^{2p}}\leq C_{T,p}\ll(1 + \esp{|U_1|^{4p^2}} + \esp{\ll|\frac{1}{\sqrt{N}}\sum_{j=1}^N U_j\rr|}\rr)\esp{\exp\ll(C_{T,p}\ll|\frac{1}{\sqrt{N}}\sum_{j=1}^N U_j\rr|\rr)},$$
and to use Lemma~\ref{momentsomme}.

Now let us prove Item~$(ii).$ Let us rewrite the dynamics of $X^N$ in the following way
\begin{align*}
X^N_t =& X^N_0 + \int_0^t b(X^N_s)ds + \frac{1}{\sqrt{N}}\sum_{j=1}^N U_j\int_{[0,t]\times\r_+}\uno{z\leq f(X^N_{s-})}d\tilde \pi_j(s,z) \\
&+ \frac{1}{\sqrt{N}}\sum_{j=1}^N U_j\int_0^t f(X^N_s)ds,
\end{align*}
where $\tilde\pi_j(ds,dz) := \pi_j(ds,dz) -dsdz$ is the compensated Poisson measure of $\pi_j.$ Then,
\begin{align*}
\ll|X^N_t\rr|^{\kappa}\leq& C_\kappa |X^N_0|^{\kappa} + C_p t^{\kappa-1}\int_0^t |b(X^N_s)|^{\kappa} ds \\
&+ C_\kappa N^{-\kappa/2} \ll|\sum_{j=1}^N U_j\int_{[0,t]\times\r_+}\uno{z\leq f(X^N_{s-})}d\tilde \pi_j(s,z)\rr|^{\kappa} \\
&+ C_\kappa t^{\kappa-1}\ll|\frac{1}{\sqrt{N}}\sum_{j=1}^N U_j\rr|^{\kappa} \int_0^t f(X^N_s)^{\kappa}ds.
\end{align*}
So,
\begin{align}
\underset{t\leq T}{\sup}\ll|X^N_t\rr|^{\kappa}\leq& C_p |X^N_0|^{\kappa} + C_{T,p}\ll(1+ \int_0^T |X^N_s|^{\kappa} ds\rr) \nonumber\\
&+ C_\kappa N^{-\kappa/2}\underset{t\leq T}{\sup}~\ll|\sum_{j=1}^N U_j\int_{[0,t]\times\r_+}\uno{z\leq f(X^N_{s-})}d\tilde \pi_j(s,z)\rr|^{\kappa}\nonumber\\
&+ C_{T,\kappa}\ll|\frac{1}{\sqrt{N}}\sum_{j=1}^N U_j\rr|^{\kappa}\ll(1 + \int_0^T |X^N_s|^{\kappa} ds\rr)\nonumber
\end{align}

Whence
\begin{align}
\espcc{\E}{\underset{t\leq T}{\sup}~|X^N_t|^{\kappa}}\leq& C_{T,\kappa}\ll|\frac{1}{\sqrt{N}}\sum_{j=1}^N U_j\rr|^{\kappa} \ll( 1 + \int_0^T\espcc{\E}{|X^N_s|^{\kappa}}ds\rr)\label{supapriori}\\
&+ C_\kappa N^{-\kappa/2}\espcc{\E}{\underset{t\leq T}{\sup}~\ll|\sum_{j=1}^N U_j\int_{[0,t]\times\r_+}\uno{z\leq f(X^N_{s-})}d\tilde \pi_j(s,z)\rr|^{\kappa}}.\nonumber
\end{align}

{
To control the local martingale term above, we use, in the following order, Burkholder-Davis-Gundy's inequality, Jensen's inequality and inequality~$(2.1.41)$ of Lemma~$2.1.7$ of \cite{jacod_discretization_2012}:
	\begin{align*}
	&N^{-\kappa/2}\espcc{\E}{\underset{t\leq T}{\sup}~\ll|\sum_{j=1}^N U_j\int_{[0,t]\times\r_+}\uno{z\leq f(X^N_{s-})}d\tilde \pi_j(s,z)\rr|^{\kappa}}\\
	&\qquad\leq C_\kappa N^{-\kappa/2}\espcc{\E}{\ll(\sum_{j=1}^N U_j^2 \int_{[0,T]\times\r_+}\uno{z\leq f(X^N_{r-})}d\pi_j(r,z)\rr)^{\kappa/2}}\\
	&\qquad\leq C_\kappa N^{-1}\sum_{j=1}^N |U_j|^{\kappa} \espcc{\E}{\ll(\int_{[0,T]\times\r_+}\uno{z\leq f(X^N_{r-})}d\pi_j(r,z)\rr)^{\kappa/2}}\\
	&\qquad \leq C_\kappa N^{-1}\sum_{j=1}^N |U_j|^{\kappa}\ll(\espcc{\E}{\int_0^T f(X^N_r)dr} + \espcc{\E}{\ll(\int_0^T f(X^N_r)dr}\rr)^{\kappa/2}\rr)\\
	&\qquad \leq C_{T,\kappa}N^{-1}\sum_{j=1}^N |U_j|^{\kappa}\ll(1 + \int_0^T\espcc{\E}{f(X^N_r)^{\kappa/2}}dr\rr)\\
	&\qquad\leq C_{T,p}N^{-1}\sum_{j=1}^N |U_j|^{\kappa}\ll(1 + \int_0^T\espcc{\E}{f(X^N_r)^{p}}dr\rr),
	\end{align*}
	where the value of the constants $C_p$ and $C_{T,p}$ change from line to line above.
	
	Then, using that, for all $x,y\in\r_+,$ $xy\leq x^2/2 + y^2/2$ we have
	\begin{align*}
	&C_{T,p}N^{-1}\sum_{j=1}^N |U_j|^{\kappa}\ll(1 + \int_0^T\espcc{\E}{|X^N_r|^p}dr\rr)\\
	&\qquad\leq C_{T,p}\ll[\ll(N^{-1}\sum_{j=1}^N |U_j|^{\kappa}\rr)^2 + \ll(1 + \int_0^T\espcc{\E}{|X^N_r|^p}dr\rr)^2\rr]\\
	&\qquad \leq C_{T,p}\ll[N^{-1}\sum_{j=1}^N |U_j|^{2\kappa} + 1 + \int_0^T\espcc{\E}{|X^N_r|^{2p}}dr\rr].
	\end{align*}
	
	Then, we obtain from~\eqref{supapriori},
	\begin{align*}
	\esp{\underset{t\leq T}{\sup}|X^N_t|^{\kappa}}\leq&  C_{T,p}\esp{\ll|\frac{1}{\sqrt{N}}\sum_{j=1}^N |U_j|\rr|^{\kappa} \ll( 1 + \int_0^T\espcc{\E}{|X^N_s|^{\kappa}}ds\rr)}\\
	&+C_{T,p}\esp{N^{-1}\sum_{j=1}^N |U_j|^{2\kappa}} + 1 + \int_0^T\esp{|X^N_r|^{2p}}dr.
	\end{align*}
	
	Now, set $q_1,q_2>1$ such that $1/q_1 + 1/q_2 = 1$ and such that $\kappa q_1 < 2p$ (which is possible since $\kappa<2p$). By H\"older's inequality,
	\begin{align*}
	\esp{\underset{t\leq T}{\sup}|X^N_t|^{\kappa}}\leq&  C_{T,p}\esp{\ll|\frac{1}{\sqrt{N}}\sum_{j=1}^N |U_j|\rr|^{\kappa q_2}}^{1/q_2}\esp{\ll( 1 + \int_0^T\espcc{\E}{|X^N_s|^{\kappa}}ds\rr)q_1}^{1/q_1}\\
	&+C_{T,p}\esp{N^{-1}\sum_{j=1}^N |U_j|^{2\kappa}} + 1 + \int_0^T\esp{|X^N_r|^{2p}}dr\\
	\leq& C_{T,p}\esp{\ll|\frac{1}{\sqrt{N}}\sum_{j=1}^N |U_j|\rr|^{\kappa q_2}}^{1/q_2}\esp{\ll( 1 + \int_0^T\espcc{\E}{|X^N_s|^{\kappa q_1}}ds\rr)}^{1/q_1}\\
	&+C_{T,p}\esp{N^{-1}\sum_{j=1}^N |U_j|^{2\kappa}} + 1 + \int_0^T\esp{|X^N_r|^{2p}}dr.
	\end{align*}
}

Recalling that $\kappa q_1<2p$, the result of Item~$(ii)$ is then a consequence of Lemma~\ref{momentsomme} and Item~$(i)$.

\subsection{Proof of Lemma~\ref{controlsemigroup}}\label{proofcontrolsemi}

Let us recall that, in all this proof, we work conditionally on the environment variable~$W$. This proof is very similar to that of Proposition~3.4 of \cite{erny_mean_2022}. We still write the proof here to have the dependency of the control w.r.t.~$W$.

Under Assumption~\ref{hypfun}, the assumptions of Theorem~1.4.1 of \cite{kunita_lectures_1986} are satisfied for the flow of the limit process~$(\bar X^{(x)}_t)_t$. Indeed, the local characteristics $(\bar b,\bar a)$ is given by
$$\bar b(x) = b(x) + Wf(x)\textrm{ and }\bar a(x,y)=\sigma^2\sqrt{f(x)f(y)}.$$

Assumptions~1 and~2 of \cite[p. 8, 9]{kunita_lectures_1986} are obtained directly from a priori estimates on the second order moments of the process (recalling that we work conditionally on the environment, this point is very classical and therefore omitted). And Assumption~3 (p.~15) is guaranteed by the Lipschitz continuity of~$b,f,\sqrt{f}$ (which is weaker than Assumption~\ref{hypfun}). And the other assumption of Theorem~1.4.1 is exactly Assumption~\ref{hypfun}.

As a consequence, we know, by Theorem~1.4.1 of \cite{kunita_lectures_1986}, that for all $t\geq 0,$ the function $x\mapsto \bar X_t^{(x)}$ is~$C^3.$ Then, to be able to deduce the regularity of the function
$$x\mapsto \bar P_{\E,t}g(x) = \espcc{\E}{\bar X^{(x)}_t},$$
we just have to differentiate under the expectation. To prove the domination condition for the equations below to be true, we prove a priori estimates on the derivatives of the stochastic flow.
\begin{align}
\ll(\bar P_{\E,t}g\rr)'(x) = & \espcc{x}{\partial_x\bar X^{(x)}_tg'(\bar X^{(x)}_t)}\nonumber\\
\ll(\bar P_{\E,t}g\rr)''(x) = &\espcc{x}{\partial^2_x\bar X^{(x)}_tg'(\bar X^{(x)}_t) + \ll(\partial_x\bar X^{(x)}_t\rr)^2g''(\bar X^{(x)}_t)}\nonumber\\
\ll(\bar P_{\E,t}g\rr)'''(x) = &\espcc{x}{\partial^3_x\bar X^{(x)}_tg'(\bar X^{(x)}_t) + 3\partial^2_x\bar X^{(x)}_s\partial_x\bar X^{(x)}_tg''(\bar X^{(x)}_t) + \ll(\partial_x\bar X^{(x)}_t\rr)^3g'''(\bar X^{(x)}_t)}\label{semigroup3}
\end{align}

\begin{rem}
	To differentiate under the expectation, instead of proving classical domination conditions like
	$$\esp{\underset{x}{\sup}~\ll|\partial_x\bar X^{(x)}_t\rr|}<\infty,$$
	we rather verify the following uniformly integrable sufficient kind of conditions
	$$\underset{x}{\sup}~\esp{\ll|\partial_x\bar X^{(x)}_t\rr|^2}<\infty.$$
	See Lemma~3.1 of \cite{erny_mean_2022-1} (inspired from Lemma~6.1 of \cite{eldredge_strong_2018}) for a formal statement of this result.
\end{rem}

To begin with, we know that the first derivative of the stochastic flow is solution to the following SDE.

$$\partial_x \bar X_t^{(x)} = 1 + \int_0^t\partial_x \bar X^{(x)}_s\ll(b'(\bar X_s^{(x)}) + Wf'(\bar X_s^{(x)})\rr)ds + \sigma\int_0^t\partial_x \bar X^{(x)}_s(\sqrt{f})'(\bar X^{(x)}_s)dB_s.$$

For any $p\in\n^*,$ by Ito's formula,
\begin{align*}
\espcc{\E}{\ll(\partial_x \bar X_t^{(x)}\rr)^{2p}}=& 1 + \int_0^t\mathbb{E}_\E(\partial_x\bar X^{(x)}_s)^{2p}\ll(2pb'(\bar X^{(x)}_s) + 2pWf'(\bar X^{(x)}_s) + p(2p-1)\sigma^2(\sqrt{f})'(\bar X^{(x)}_s)\rr)ds\\
\leq& 1 + C_p(1+|W|)\int_0^t\espcc{\E}{\ll(\partial_x \bar X_t^{(x)}\rr)^{2p}}ds.
\end{align*}

By Gr\"onwall's lemma,
\begin{equation}\label{fstder}
\espcc{\E}{\ll(\partial_x \bar X_t^{(x)}\rr)^{2p}} \leq e^{C_{p,t}(1+|W|)}.
\end{equation}

Let us prove a similar result on the second derivative of the stochastic flow.

\begin{align*}
\partial^2_x \bar X^{(x)}_t = & \int_0^t \partial^2_x\bar X^{(x)}_s\ll(b'(\bar X^{(x)}_s) + Wf'(\bar X^{(x)}_s)\rr)ds + \int_0^t \ll(\partial_x\bar X^{(x)}_s\rr)^2\ll(b''(\bar X^{(x)}_s) + Wf''(\bar X^{(x)}_s)\rr)ds\\
&+\sigma\int_0^t \partial^2_x \bar X^{(x)}_s (\sqrt{f})'(\bar X^{(x)}_s)dB_s + \sigma\int_0^t \ll(\partial_x \bar X^{(x)}_s\rr)^2 (\sqrt{f})''(\bar X^{(x)}_s)dB_s.
\end{align*}

Then, by Ito's formula,
\begin{align*}
\espcc{\E}{\ll(\partial^2_x \bar X^{(x)}_t\rr)^{2p}}=& 2p\int_0^t \mathbb{E}_\E \ll(\partial^2_x \bar X^{(x)}_s\rr)^{2p}\ll(b'(\bar X^{(x)}_s) + Wf'(\bar X^{(x)}_s)\rr)ds\\
&+ 2p\int_0^t \mathbb{E}_\E \ll(\partial^2_x \bar X^{(x)}_s\rr)^{2p-1}\ll(\partial_x \bar X^{(x)}_s\rr)^2\ll(b''(\bar X^{(x)}_s) + Wf''(\bar X^{(x)}_s)\rr)ds\\
&+p(2p-1)\sigma^2\int_0^t \mathbb{E}_\E \ll(\partial^2_x \bar X^{(x)}_s\rr)^{2p}\ll((\sqrt{f})'(\bar X^{(x)}_s)\rr)^2ds\\
&+p(2p-1)\sigma^2\int_0^t \mathbb{E}_\E \ll(\partial^2_x \bar X^{(x)}_s\rr)^{2p-2}\ll(\partial_x \bar X^{(x)}_s\rr)^4\ll((\sqrt{f})''(\bar X^{(x)}_s)\rr)^2ds.
\end{align*}

Then using that for all $x,y,z\geq 0,$
$$x^{2p-1}y \leq C_p (x^{2p} + y^{2p})\textrm{ and }x^{2p-2}z\leq C_p(x^{2p} + z^{2p}),$$
we obtain from the previous inequality
\begin{align*}
\espcc{\E}{\ll(\partial^2_x \bar X^{(x)}_t\rr)^{2p}}\leq& C_p(1+|W|)\int_0^t\mathbb{E}_\E \ll(\partial^2_x \bar X^{(x)}_s\rr)^{2p}ds + C_p\ll(1 + |W|\rr)\int_0^t\mathbb{E}_\E \ll(\partial_x \bar X^{(x)}_s\rr)^{4p}ds\\
&+ C_p \int_0^t \mathbb{E}_\E \ll(\partial^2_x \bar X^{(x)}_s\rr)^{2p}ds + C_p\int_0^t\mathbb{E}_\E \ll(\partial_x \bar X^{(x)}_s\rr)^{8p}ds.
\end{align*}

Thanks to~\eqref{fstder},
$$\espcc{\E}{\ll(\partial^2_x \bar X^{(x)}_t\rr)^{2p}}\leq C_{p,t}e^{C_{p,t}|W|} + C_p\ll(1+|W|\rr)\int_0^t \espcc{\E}{\ll(\partial^2_x \bar X^{(x)}_s\rr)^{2p}}ds,$$
whence, by Gr\"onwall's lemma,
\begin{equation}\label{sndder}
\espcc{\E}{\ll(\partial^2_x \bar X^{(x)}_t\rr)^{2p}}\leq C_{p,t}e^{C_{p,t}|W|}.
\end{equation}

With the same reasonning, we can also prove that
\begin{equation}\label{thrder}
\espcc{\E}{\ll(\partial^3_x \bar X_t^{(x)}\rr)^{2p}} \leq C_{p,t} e^{C_{p,t}|W|}.
\end{equation}

Finally, using~\eqref{fstder},~\eqref{sndder} and~\eqref{thrder} in the formulae~\eqref{semigroup3} allows to prove the result.

\end{appendix}

\medskip

{\bf Acknowledgments.} The author would like to thank an anonymous referee for suggesting to use the law of the iterated logarithm to study the quenched properties of the model.

\bibliography{biblio}       


\end{document}